\numberwithin{equation}{section}
\theoremstyle{definition}
\newtheorem*{rep@theorem}{\rep@title}
\newcommand{\newreptheorem}[2]{%
\newenvironment{rep#1}[1]{%
 \def\rep@title{#2 \ref{##1}}%
 \begin{rep@theorem}}%
 {\end{rep@theorem}}}
\newtheorem{theorem}{Theorem}[section]
\newtheorem{corollary}[theorem]{Corollary}
\newtheorem{lemma}[theorem]{Lemma}
\newtheorem{proposition}[theorem]{Proposition}
\newtheorem*{theorem*}{Theorem}
\newtheorem*{proposition*}{Proposition}
\newtheorem{definition}[theorem]{Definition}
\newtheorem*{claim*}{Claim}
\newtheorem*{conjecture*}{Conjecture}
\newtheorem*{observation*}{Observation}
\newtheorem*{question*}{Question}
\newcommand{\fai}{\varphi}
\newcommand{\Hinv}{H_{\rm inv}}
\newcommand{\Tr}{{\rm Tr}}
\def\A{G}
\begin{document}
\title{Mean ergodic theorem for amenable discrete quantum groups and a Wiener type theorem for compact metrizable groups}
\author{Huichi Huang}
\address{Huichi Huang, Mathematisches Institut, Universit\"{a}t M\"{u}nster, Einsteinstr. 62,  M\"{u}nster, 48149, Germany}
\email{huanghuichi@uni-muenster.de}
\keywords{Mean ergodic theorem, coamenable compact quantum group, amenable discrete quantum group, continuous measure}
\subjclass[2010]{Primary 46L65, 37A30,  43A05}
\date{\today}
\thanks{{\it Supported by:}  ERC Advanced Grant No. 267079}
\begin{abstract}
We prove a mean ergodic theorem for amenable discrete quantum groups. As an application, we prove  a Wiener type theorem for continuous measures on compact metrizable groups.
\end{abstract}

\maketitle
\tableofcontents

\section{Introduction}\

A countable discrete group $\Gamma$ is  called {\bf amenable} if there exists a sequence $\{F_n\}_{n=1}^\infty$ (called a right F{\o}lner sequence) consisting of finite subsets $F_n$ of $\Gamma$ such that
$$\lim_{n\to\infty}\frac{1}{|F_n|}|F_ns\Delta F_n|=0$$ for every $s\in\Gamma$.

Let $(X,\mathcal{B},\mu,\Gamma)$ be a dynamical system consisting of  a countable discrete amenable group $\Gamma$ with a measure-preserving action on a probability space $(X,\mathcal{B},\mu)$.

Recall that von Neumann's mean ergodic theorem for amenable group actions on measure spaces says the following.

\medskip

\begin{theorem}~[Measure space version of von Neumann's mean ergodic theorem]~\cite[3.33]{Glasner2003}\

Let  $\{F_n\}_{n=1}^\infty$ be a right F{\o}lner sequence of \,$\Gamma$. Then for every $f\in L^2(X,\mu)$, the sequence
$\frac{1}{|F_n|}\sum_{s\in F_n} s\cdot f$ converges to $Pf$ with respect to $L^2$ norm, where $P$ is the orthogonal projection from $L^2(X,\mu)$ onto the space $\{g\in L^2(X,\mu)|s\cdot g=g \,{\rm for \,all}\, s\in\Gamma\}.$
\end{theorem}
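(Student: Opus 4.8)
The plan is to work entirely inside the Hilbert space $L^2(X,\mu)$ and exploit the orthogonal decomposition $L^2(X,\mu)=\Hinv\oplus\Hinv^{\perp}$, where $\Hinv:=\{g\in L^2(X,\mu):s\cdot g=g\text{ for all }s\in\Gamma\}$ is a closed subspace (an intersection of kernels of bounded operators) and is precisely the range of $P$. Writing $A_nf:=\frac{1}{|F_n|}\sum_{s\in F_n}s\cdot f$, note that on $\Hinv$ each $A_n$ acts as the identity, so there $A_nf=f=Pf$. Since $A_nPf=Pf$ for general $f$, the entire content of the theorem reduces to the assertion that $A_ng\to 0$ in $L^2$ norm for every $g\in\Hinv^{\perp}$.

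The first and key step is to describe $\Hinv^{\perp}$ concretely. Each $s\in\Gamma$ acts on $L^2(X,\mu)$ as a unitary operator $U_s$, and for a unitary one has $\ker(I-U_s)=\ker(I-U_s^{*})$ (apply $U_s^{*}$ to a fixed vector), while the general identity $\range(I-U_s)^{\perp}=\ker\bigl((I-U_s)^{*}\bigr)=\ker(I-U_s^{*})$ holds for any bounded operator. Intersecting over $s\in\Gamma$ gives
$$\Hinv=\bigcap_{s\in\Gamma}\ker(I-U_s)=\Bigl(\,\overline{\textstyle\sum_{s\in\Gamma}\range(I-U_s)}\,\Bigr)^{\!\perp},$$
so $\Hinv^{\perp}$ is exactly the closed linear span of the coboundaries $g-s\cdot g$ with $g\in L^2(X,\mu)$, $s\in\Gamma$. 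I expect this Hilbert-space lemma to be the only genuine obstacle; the rest is the standard skeleton of mean ergodic theorems.

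It then remains to test the averages on coboundaries and pass to the closure. For $h=g-s\cdot g$, reindexing the second sum by $u=ts$ and cancelling the terms labelled by $F_n\cap F_n s$,
$$A_nh=\frac{1}{|F_n|}\Bigl(\sum_{t\in F_n}t\cdot g-\sum_{u\in F_n s}u\cdot g\Bigr),$$
whence, since each $U_t$ is isometric,
$$\|A_nh\|\le\frac{|F_n\,\Delta\,F_n s|}{|F_n|}\,\|g\|\longrightarrow 0\qquad(n\to\infty)$$
by the right F\o{}lner property; by linearity $A_nh\to 0$ for every finite linear combination $h$ of coboundaries. Each $A_n$ is a contraction, being an average of the unitaries $U_s$, so $\{A_n\}$ is uniformly bounded, and a routine $\varepsilon/3$ argument promotes $A_ng\to 0$ from the dense subspace of finite sums of coboundaries to all of $\Hinv^{\perp}$. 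Decomposing $f=Pf+(f-Pf)$ with $Pf\in\Hinv$ and $f-Pf\in\Hinv^{\perp}$ yields $A_nf=Pf+A_n(f-Pf)\to Pf$, as claimed.
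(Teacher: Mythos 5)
Your proof is correct: the decomposition $L^2=H_{\rm inv}\oplus H_{\rm inv}^{\perp}$, the identification of $H_{\rm inv}^{\perp}$ with the closed span of coboundaries via $\ker(I-U_s)=\ker(I-U_s^{*})$, the telescoping/F{\o}lner estimate on coboundaries, and the uniform-boundedness density argument are exactly the standard proof of this classical theorem, which the paper itself only cites from Glasner. This is also essentially the same skeleton the paper uses for its quantum generalization (Theorem~\ref{MeanErgodic}): identify the invariant subspace, describe its orthogonal complement as a closed span of ranges (Proposition~\ref{prop:purp}), and kill the averages there using the F{\o}lner condition, so no further comparison is needed.
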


R. Duvenhage proves a generalization of von Neumann's mean ergodic theorem for coactions of amenable quantum groups on von Neumann algebras~(noncommutative measure spaces)~\cite[Theorem 3.1.]{Duvenhage2008}. Later a more general version is proved by V. Runge and A. Viselter~\cite[Theorem 2.2]{RV2014}.

There is also a version of von Neumann's mean ergodic theorem for amenable group actions on Hilbert spaces, which says the following.

\medskip

\begin{theorem}~[Hilbert space version of von Neumann's mean ergodic theorem]\

Let  $\{F_n\}_{n=1}^\infty$ be a right F{\o}lner sequence of a countable discrete amenable group $\Gamma$ and $\pi:\Gamma\to B(H)$ be a unitary representation of\,  $\Gamma$ on a Hilbert space $H$. Then
$$\lim_{n\to\infty}\frac{1}{|F_n|}\sum_{s\in F_n} \pi(s)=P$$ under strong operator topology on $B(H)$, where $P$ is  the orthogonal projection from $H$ onto $H_\Gamma=\{x\in H\,|\,\pi(s)x=x \, {\rm for\, all}\, s\in\Gamma\}.$
\end{theorem}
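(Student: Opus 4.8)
The plan is to decompose $H$ orthogonally as $H=H_\Gamma\oplus H_\Gamma^\perp$ and to analyze the averaging operators $A_n:=\frac{1}{|F_n|}\sum_{s\in F_n}\pi(s)$ separately on each summand. On $H_\Gamma$ every $\pi(s)$ restricts to the identity, so $A_n$ restricted to $H_\Gamma$ equals the identity for every $n$; this is the source of the ``$P$'' in the limit. Hence the real content is to show that $A_nx\to 0$ for every $x\in H_\Gamma^\perp$.

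First I would identify the orthogonal complement as a space of coboundaries: I claim
$$H_\Gamma^\perp=\overline{\mathrm{span}}\{\,\pi(s)x-x\ :\ s\in\Gamma,\ x\in H\,\}.$$
Indeed, a vector $y$ is orthogonal to every $\pi(s)x-x$ iff $\langle(\pi(s)^*-I)y,x\rangle=0$ for all $x\in H$ and $s\in\Gamma$, i.e.\ iff $\pi(s^{-1})y=y$ for all $s$, i.e.\ iff $y\in H_\Gamma$; taking orthogonal complements gives the claim.

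Next I would verify the convergence on the dense subspace of coboundaries. For a single vector $\pi(s)x-x$, using $\{ts:t\in F_n\}=F_ns$ one gets
$$A_n\bigl(\pi(s)x-x\bigr)=\frac{1}{|F_n|}\Bigl(\sum_{t\in F_ns}\pi(t)x-\sum_{t\in F_n}\pi(t)x\Bigr),$$
and since each $\pi(t)$ is unitary the norm of the right-hand side is at most $\frac{1}{|F_n|}\,|F_ns\,\Delta\,F_n|\cdot\|x\|$, which tends to $0$ by the right F\o lner condition. By linearity the same holds on the whole linear span of coboundaries.

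Finally, since each $\pi(s)$ is unitary we have $\|A_n\|\le 1$ for all $n$, so a standard $\varepsilon/3$ approximation argument upgrades the convergence $A_nx\to 0$ from the dense subspace to all of $H_\Gamma^\perp$. Combining this with the behaviour on $H_\Gamma$ and writing an arbitrary $x$ as $x=Px+(x-Px)$ with $Px\in H_\Gamma$, $x-Px\in H_\Gamma^\perp$, yields $A_nx\to Px$, i.e.\ $A_n\to P$ in the strong operator topology. I do not expect a serious obstacle; the only points requiring care are matching the \emph{right} F\o lner condition with the coboundary computation above (a left F\o lner sequence would instead be adapted to $\pi(s)x-x$ reindexed on the left) and having the uniform bound $\|A_n\|\le 1$ in place before invoking the density argument.
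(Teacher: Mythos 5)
Your proof is correct, and it is essentially the same argument the paper relies on: the paper states this classical theorem without proof, but its proof of the quantum generalization (Theorem~\ref{MeanErgodic}) follows exactly your scheme --- the averages act as the identity on the invariant subspace, the orthogonal complement is identified as the closed span of ``coboundary'' vectors (Lemma~\ref{purp} via Proposition~\ref{prop:purp}), a F{\o}lner estimate kills the averages on those vectors, and the uniform bound $\|A_n\|\le 1$ plus density finishes the argument. No gaps.
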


The group $C^*$-algebra $C^*(\Gamma)$  equals  $C(G)$ for a coamenable compact quantum group $G$ with the dual group $\widehat{G}=\Gamma$. The counit $\varepsilon$ of $G$ is given by $\varepsilon(\delta_s)=1$ for all $s\in\Gamma$. Hence $H_\Gamma=\{x\in H\,|\,\pi(a)x=\varepsilon(a)x \, {\rm for\, all}\, a\in C^*(\Gamma)\}$. With these in mind, the Hilbert space version of von Neumann's mean ergodic theorem could be reformulated in the framework of compact quantum groups as follows.

Suppose  $G$ is a coamenable compact quantum group such that the dual $\widehat{G}$ is  a countable discrete amenable group $\Gamma$. Let $\{F_n\}_{n=1}^\infty$ be a right F{\o}lner sequence of $\Gamma$ and $\pi:C(G)=C^*(\Gamma)\to B(H)$ be a  representation of $C^*(\Gamma)$ on a Hilbert space $H$. Then
$$\lim_{n\to\infty}\frac{1}{|F_n|}\sum_{s\in F_n} \pi(s)=P$$ under strong operator topology on $B(H)$, where $P$ is  the orthogonal projection from $H$ onto $H_\Gamma=\{x\in H\,|\,\pi(a)x=\varepsilon(a)x \, {\rm for\, all}\, a\in C^*(\Gamma)\}.$

D. Kyed proves that a compact quantum group $G$ is coamenable iff there exists a right F{\o}lner sequence $\{F_n\}_{n=1}^\infty$ of finite subsets in its dual $\widehat{G}$, that is to say, $G$ is a coamenable compact quantum group iff $\widehat{G}$ is an amenable discrete quantum group~\cite[Definition 4.9.]{Kyed2008}.~\footnote{Existence of F{\o}lner sequence for  Kac type compact quantum groups is shown by Z. Ruan in~\cite{Ruan1996}. Also cf.~\cite{Tomatsu2006}.} So it is natural to ask for a generalization of the Hilbert space version of von Neumann's mean ergodic theorem to all amenable discrete quantum groups. This is the main result of the paper.

\medskip

\begin{reptheorem}{MeanErgodic}~[Mean ergodic theorem for amenable discrete quantum groups]\

Let $G$ be a coamenable compact quantum group  with counit $\varepsilon$ and  $\{F_n\}_{n=1}^\infty$ be a right F{\o}lner sequence of $\widehat{G}$. For a representation $\pi:A=C(G)\to B(H)$, we have
\begin{equation}~\label{EqCon}
\lim_{n\to\infty}\frac{1}{|F_n|_w}\sum_{\alpha\in F_n}d_\alpha\pi(\chi(\alpha))=P
\end{equation}
under strong operator topology, where $P$ is the orthogonal projection from $H$ onto $\Hinv=\{x\in H\,|\,\pi(a)x=\varepsilon(a) x\, {\rm for\, all}\, a\in A\}$.
\end{reptheorem}

Here  $|F_n|_w$ stands for the weighted cardinality of $F_n$. Definitions of $|F_n|_w$, $d_\alpha$ and $\chi(\alpha)$ are in section 2.

The left hand side of Equation (1.1) involves both representation of  a coamenable compact quantum group $G$ and that of its discrete quantum group dual $\widehat{G}$, so it illustrates some interactions between them.

The rest part of the paper aims at an application of Theorem~\ref{MeanErgodic}. Namely, we prove a Wiener type theorem for finite Borel measures on compact metrizable groups.

A finite Borel  measure $\mu$ on a compact metrizable space $X$ is called {\bf continuous}, or {\bf non-atomic} if $\mu\{x\}=0$ for every $x\in X$.

The following  theorem of N. Wiener expresses finite Borel measures on the unit circle via their Fourier coefficients~\cite{Wiener1933}.

\medskip

\begin{theorem}~[Wiener's Theorem]~\cite[Chapter 1, 7.13]{Katznelson2004}~\label{Wiener}\

For a finite Borel  measure $\mu$ on the unit circle $\mathbb{T}$ and every $z\in\mathbb{T}$, one have
$$\lim_{N\to\infty} \frac{1}{2N+1}\sum_{n=-N}^N\hat{\mu}(n)z^{-n}=\mu\{z\},$$ and
$$\lim_{N\to\infty} \frac{1}{2N+1}\sum_{n=-N}^N|\hat{\mu}(n)|^2=\sum_{x\in \mathbb{T}} \mu\{x\}^2.$$ Hence
$\mu$ is continuous iff
$$\lim_{N\to\infty} \frac{1}{2N+1}\sum_{n=-N}^N|\hat{\mu}(n)|^2=0.$$ Where $\hat{\mu}(n):=\int_{\mathbb{T}}z^n\,d\mu(z)$ for every $n\in \mathbb{Z}$ are Fourier coefficients of $\mu$.
\end{theorem}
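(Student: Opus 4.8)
The plan is to deduce Theorem~\ref{Wiener} from the mean ergodic theorem (Theorem~\ref{MeanErgodic}), since $\mathbb{T}$ is a commutative — hence coamenable — compact quantum group with $C(\mathbb{T})=C^*(\mathbb{Z})$ and dual $\widehat{\mathbb{T}}=\mathbb{Z}$, an amenable discrete group. Every irreducible representation of $\mathbb{T}$ is one-dimensional, so for the right F{\o}lner sequence $F_N=\{-N,\dots,N\}$ of $\mathbb{Z}$ we have $d_n=1$, $|F_N|_w=2N+1$, $\chi(n)$ is the character $z\mapsto z^n$, and the counit of $C(\mathbb{T})$ is evaluation at $1\in\mathbb{T}$. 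Given a finite Borel measure $\mu$ on $\mathbb{T}$, I would attach to it the GNS-type representation $\pi_\mu\colon C(\mathbb{T})\to B(H)$, $H=L^2(\mathbb{T},\mu)$, by multiplication $\pi_\mu(f)g=fg$, with the constant function $\mathbf 1$ as a cyclic vector; then $\langle\pi_\mu(\chi(n))\mathbf 1,\mathbf 1\rangle=\int_{\mathbb{T}}z^n\,d\mu(z)=\hat\mu(n)$.

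Next I would identify the invariant subspace: $\Hinv=\{g\in H:fg=f(1)g\text{ for all }f\in C(\mathbb{T})\}$, which (since multiplication by $C(\mathbb{T})$ determines multiplication by $L^\infty(\mathbb{T},\mu)$) forces $g$ to be supported on $\{1\}$, so $\Hinv=\mathbb{C}\,\mathbf 1_{\{1\}}$: one-dimensional when $\mu\{1\}>0$ and $\{0\}$ otherwise. A short computation gives $\langle P\mathbf 1,\mathbf 1\rangle=\mu\{1\}$ in either case. Applying Theorem~\ref{MeanErgodic} and pairing the SOT-convergent averages $\frac{1}{2N+1}\sum_{n=-N}^N\pi_\mu(z^n)$ against $\mathbf 1$ yields $\frac{1}{2N+1}\sum_{n=-N}^N\hat\mu(n)\to\mu\{1\}$. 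For a general $z\in\mathbb{T}$ I would apply this to the translated measure $\mu_z$, the push-forward of $\mu$ under $w\mapsto wz^{-1}$, which is again a finite Borel measure with $\widehat{\mu_z}(n)=z^{-n}\hat\mu(n)$ and $\mu_z\{1\}=\mu\{z\}$; this gives the first displayed limit.

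For the second limit I would pass to the autocorrelation measure $\rho$, the push-forward of the finite Borel measure $\mu\times\mu$ on $\mathbb{T}\times\mathbb{T}$ under $(w,v)\mapsto wv^{-1}$, so that $\hat\rho(n)=\iint(wv^{-1})^n\,d\mu(w)\,d\mu(v)=\hat\mu(n)\overline{\hat\mu(n)}=|\hat\mu(n)|^2$. The case $z=1$ of the first limit, applied to $\rho$, gives $\frac{1}{2N+1}\sum_{n=-N}^N|\hat\mu(n)|^2\to\rho\{1\}$. Finally, the diagonal of $\mathbb{T}\times\mathbb{T}$ is Borel and Fubini applies, so $\rho\{1\}=(\mu\times\mu)(\{(w,v):w=v\})=\int_{\mathbb{T}}\mu\{v\}\,d\mu(v)=\sum_{x\in\mathbb{T}}\mu\{x\}^2$, the integrand being supported on the at most countable set of atoms. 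The stated equivalence is then immediate: a sum of non-negative terms vanishes iff each term does, i.e.\ iff $\mu$ is continuous.

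Most of the above is routine measure-theoretic bookkeeping; the one place that needs care is matching the normalizations in Theorem~\ref{MeanErgodic} — the factors $d_\alpha$, the characters $\chi(\alpha)$, the weighted cardinality $|F_n|_w$, and the counit — to the classical picture on $\mathbb{T}$, and correctly pinning down $\Hinv$ and the value $\langle P\mathbf 1,\mathbf 1\rangle$. As an alternative independent of Theorem~\ref{MeanErgodic}, the first limit can be obtained directly by writing $\frac{1}{2N+1}\sum_{n=-N}^N\hat\mu(n)z^{-n}=\int_{\mathbb{T}}K_N(wz^{-1})\,d\mu(w)$ with $K_N(u)=\frac{1}{2N+1}\sum_{n=-N}^N u^n$ satisfying $|K_N|\le 1$, $K_N(1)=1$ and $K_N(u)\to 0$ for $u\neq 1$, so that dominated convergence gives $\mu\{z\}$; the remaining steps proceed exactly as above.
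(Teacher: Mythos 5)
Your argument is correct, but note that the paper never proves Theorem~\ref{Wiener} itself --- it is quoted from Katznelson --- and the relevant comparison is with the paper's proof of its generalization, Theorem~\ref{WienerType}, of which this is the case $G=\mathbb{T}$. Your route follows that proof's skeleton (establish the limit at the identity, then translate by $\delta_{z^{-1}}$ for the first formula and pass to the autocorrelation $\mu*\bar{\mu}$ for the second), but with two genuine local differences. First, you obtain the $z=1$ case by applying Theorem~\ref{MeanErgodic} directly to the multiplication representation of $C(\mathbb{T})$ on $L^2(\mathbb{T},\mu)$ and identifying $\Hinv=\mathbb{C}\,\mathbf{1}_{\{1\}}$, $\langle P\mathbf{1},\mathbf{1}\rangle=\mu\{1\}$; the paper instead goes through Corollary~\ref{PureStates} (pure states, i.e.\ point evaluations) plus dominated convergence of the pointwise averages of characters (Lemma~\ref{MeasureatIdentity}). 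Your variant is a nice shortcut in the commutative case, since multiplication by $C(\mathbb{T})$ pins down the invariant subspace directly, whereas the paper's route via pure states is what generalizes cleanly to arbitrary finite Borel measures on arbitrary compact metrizable groups. Second, you compute $\rho\{1\}=(\mu\times\mu)(\mathrm{diag})=\sum_x\mu\{x\}^2$ by Fubini on the (closed, hence Borel) diagonal, while the paper decomposes $\mu$ into its atomic and continuous parts and tracks which convolutions remain continuous; both are correct, and the Fubini computation is arguably cleaner. Your closing alternative via the kernel $K_N(u)=\frac{1}{2N+1}\sum_{n=-N}^N u^n$ with $|K_N|\le 1$ and $K_N\to\mathbf{1}_{\{1\}}$ pointwise is exactly the classical dominated-convergence proof that the paper's citation to Katznelson refers to, and is the most elementary way to get the circle case without any quantum-group input.
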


There are various generalized Wiener's Theorems (we call such generalizations Wiener type theorems), say, a version for compact manifolds~\cite[Chapter XII, Theorem 5.1]{Taylor1981}, a version for compact Lie groups by M. Anoussis and A. Bisbas~\cite[Theorem 7]{AnoussisBisbas2000}, and a version  for compact homogeneous manifolds by M. Bj\"{o}rklund and A. Fish~\cite[Lemma 2.1]{BjorklundFish2009}.

We apply the above mean ergodic theorem~(Theorem~\ref{MeanErgodic}) to get a version of Wiener type theorem  on compact metrizable groups. This version differs from previous ones mainly in two aspects: firstly we don't require smoothness on  spaces, secondly we use a different F{\o}lner  condition.

\medskip

\begin{reptheorem}{WienerType}~[Wiener type theorem for compact metrizable groups]\

Let $G$ be a compact metrizable group. Given a $y$ in $G$ and a right F{\o}lner sequence $\{F_n\}_{n=1}^\infty$ of $\widehat{G}$, for a finite Borel measure $\mu$ on $G$, one have
$$\lim_{n\to\infty}\frac{1}{|F_n|_w}\sum_{\alpha\in F_n}d_\alpha\sum_{1\leq i,j\leq d_\alpha}\mu(u^\alpha_{ij})\overline{u^\alpha_{ij}(y)}=\mu\{y\},$$
and $$\lim_{n\to\infty}\frac{1}{|F_n|_w}\sum_{\alpha\in F_n}d_\alpha\sum_{1\leq i,j\leq d_\alpha}|\mu(u^\alpha_{ij})|^2=\sum_{x\in G} \mu\{x\}^2.$$
Hence $\mu$ is continuous iff
$$\lim_{n\to\infty}\frac{1}{|F_n|_w}\sum_{\alpha\in F_n}d_\alpha\sum_{1\leq i,j\leq d_\alpha}|\mu(u^\alpha_{ij})|^2=0.$$
\end{reptheorem}

Here $u^\alpha_{ij}$'s are the matrix coefficients of the irreducible unitary representation $\alpha$ of $G$. Cf. section 2 for the precise definition.

The paper is organized as follows.

In the preliminary section, we collect some basic facts in compact quantum group theory. In section 3, we prove mean ergodic theorem, i.e., Theorem~\ref{MeanErgodic}. As a consequence, we obtain Corollary~\ref{PureStates}, which is used in section 4 to prove Theorem~\ref{WienerType}.

\section*{Acknowledgements}

I thank Martijn Caspers for pointing out reference~\cite{Kyed2008} to me which motivates the article. I am  grateful to Hanfeng Li and Shuzhou Wang for their comments. I  thank Ami Viselter for reminding me of some preceding works. At last but not least I  thank the anonymous referee and  the editor for their comments and suggestions, which greatly improve the readability of the article. 

\section{Preliminary}\

\subsection{Conventions}\

Within this paper, we use $B(H,K)$ to denote the space of bounded linear operators from a Hilbert space $H$ to another Hilbert space $K$, and $B(H)$ stands for $B(H,H)$.

A net  $\{T_\lambda\}\subset B(H)$ converges to $T\in B(H)$ under strong operator topology~(SOT) if $T_\lambda x\to Tx$ for every $x\in H$, and  $\{T_\lambda\}$  converges to $T\in B(H)$ under weak operator topology~(WOT) if
$\langle T_\lambda x,y\rangle\to \langle Tx,y\rangle$ for all $x,y\in H$.

The notation $A \otimes B$ always means the minimal tensor product of two $C^*$-algebras $A$ and $B$.

For a state $\fai$ on a unital $C^*$-algebra $A$, we use $L^2(A,\fai)$ to denote the Hilbert space of GNS representation of $A$ with respect to $\fai$. The image of  an $a\in A$ in $L^2(A,\fai)$ is denoted by $\hat{a}$.

In this paper all $C^*$-algebras are assumed to be unital and separable.

\subsection{Some  facts about compact quantum groups}\

Compact quantum groups are  noncommutative analogues of compact groups. They are introduced by S. L. Woronowicz~\cite{Woronowicz1987,Woronowicz1998}.

\medskip

\begin{definition}
A {\bf compact quantum group} is a pair $(A,\Delta)$ consisting of a unital $C^*$-algebra $A$ and a unital $*$-homomorphism $$\Delta: A\rightarrow A\otimes A$$ such that
\begin{enumerate}
\item $(id\otimes\Delta)\Delta=(\Delta\otimes id)\Delta$.
\item $\Delta(A)(1\otimes A)$ and $\Delta(A)(A\otimes 1)$ are dense in $A\otimes A$.
\end{enumerate}
The $*$-homomorphism $\Delta$ is called the {\bf coproduct} of $\A$.
\end{definition}

One may think of $A$ as $C(G)$, the $C^*$-algebra of continuous functions on a compact quantum space $G$ with a quantum group structure. In the rest of the paper we write  a compact quantum group $(A,\Delta)$ as $G$.

There exists a unique state $h$ on $A$ such that $$(h\otimes id)\Delta(a)=(id\otimes h)\Delta(a)=h(a)1_A$$
 for all $a$ in $A$. The state $h$ is called the {\bf Haar measure} of $G$. Throughout this paper, we use $h$ to denote it.

For a compact quantum group $G$, there is a unique dense unital $*$-subalgebra $\mathcal{A}$ of $A$ such that
\begin{enumerate}
\item  $\Delta$  maps from $\mathcal{A}$ to $\mathcal{A}\odot \mathcal{A}$~(algebraic tensor product).
\item There exists a unique multiplicative linear functional $\varepsilon : \mathcal{A} \to \mathbb{C}$ and a linear map $\kappa : \mathcal{A} \to \mathcal{A}$ such that
$(\varepsilon \otimes id)\Delta(a) = (id \otimes \varepsilon) \Delta(a) = a$ and $m(\kappa \otimes id) \Delta(a) = m(id \otimes \kappa)\Delta(a) = \varepsilon(a)1$
for all $a \in \mathcal{A}$, where $m:\mathcal{A} \odot \mathcal{A}\to \mathcal{A}$ is the multiplication map. The functional $\varepsilon$ is called {\bf counit} and $\kappa$ the {\bf coinverse} of $C(G)$.
\end{enumerate}

Note that $\varepsilon$ is only densely defined and not necessarily bounded. If $\varepsilon$ is bounded and $h$ is faithful~($h(a^*a)=0$ implies $a=0$), then $G$ is called {\bf coamenable}~\cite{BMT2001}. Examples of coamenable compact quantum groups include $C(G)$ for a compact group $G$ and $C^*(\Gamma)$ for a  discrete amenable group \,$\Gamma$.

A nondegenerate (unitary) {\bf representation} $U$ of  a compact quantum group $G$ is an invertible~(unitary) element in $M(K(H)\otimes A)$ for some Hilbert space $H$ satisfying that $U_{12}U_{13}=(id\otimes \Delta)U$. Here $K(H)$ is the $C^*$-algebra of compact operators on $H$ and  $M(K(H)\otimes A)$ is the multiplier $C^*$-algebra of  $K(H)\otimes A$.

We  write $U_{12}$ and $U_{13}$ respectively for the images of  $U$ by two maps from $M(K(H)\otimes A)$ to $M(K(H)\otimes A\otimes A)$ where the first one is obtained by extending the map $x \mapsto x \otimes 1$ from $K(H) \otimes A$ to $K(H) \otimes A\otimes A$, and the second one is obtained by composing this map with the flip on the  last two factors. The Hilbert space $H$ is called the \textbf{carrier Hilbert space} of $U$. From now on, we always assume representations are nondegenerate. If the carrier Hilbert space $H$ is of finite dimension, then $U$ is called a finite dimensional representation of $G$.

For two representations $U_1$ and $U_2$ with the carrier Hilbert spaces $H_1$ and $H_2$ respectively, the set of
{\bf intertwiners}  between $U_1$ and $U_2$, ${\rm Mor}(U_1,U_2)$, is defined by
$${\rm Mor}(U_1,U_2)=\{T\in B(H_1,H_2)|(T\otimes 1)U_1=U_2(T\otimes 1)\}.$$
Two representations $U_1$ and $U_2$ are equivalent if there exists a bijection $T$ in ${\rm Mor}(U_1,U_2)$.
A representation $U$ is called {\bf irreducible} if ${\rm Mor}(U,U)\cong\mathbb{C}$.

Moreover, we have the following well-established facts about representations of compact quantum groups:
\begin{enumerate}
\item Every finite dimensional representation is equivalent to a unitary representation.
\item Every irreducible representation is  finite dimensional.
\end{enumerate}
Let $\widehat{G}$ be the set of equivalence classes of irreducible unitary representations of $G$. For every $\gamma\in \widehat{G}$, let $U^{\gamma}\in \gamma$  be unitary and $H_{\gamma}$ be its carrier Hilbert space with dimension $d_{\gamma}$. After fixing an orthonormal basis of $H_{\gamma}$, we can write $U^{\gamma}$ as  $(u^{\gamma}_{ij})_{1\leq i,j\leq d_{\gamma}}$ with $u^{\gamma}_{ij}\in A$, and
$$\Delta(u^{\gamma}_{ij})=\sum_{k=1}^{d_\gamma}u^{\gamma}_{ik}\otimes u^{\gamma}_{kj}$$ for all $1\leq i,j\leq d_\gamma$.

The matrix $\overline{U^{\gamma}}$ is still an irreducible representation~(not necessarily unitary) with the carrier Hilbert space $\bar{H}_\gamma$. It is called the {\bf conjugate} representation of $U^\gamma$ and the equivalence class of $\overline{U^{\gamma}}$ is denoted by $\bar{\gamma}$.

Given two finite dimensional representations  $\alpha,\beta$ of $G$,  fix orthonormal basises for $\alpha$ and $\beta$ and write $\alpha,\beta$ as $U^\alpha, U^\beta$ in matrix forms respectively. Define the {\bf direct sum}, denoted by $\alpha+\beta$ as an equivalence class of unitary representations of dimension $d_\alpha+d_\beta$ given by
$\bigl(\begin{smallmatrix}
U^\alpha&0\\ 0&U^\beta
\end{smallmatrix} \bigr)$, and
the {\bf tensor product}, denoted by $\alpha\beta$,  is an equivalence class of unitary representations of dimension $d_\alpha d_\beta$ whose matrix form is given by $U^{\alpha\beta}=U^\alpha_{13}U^\beta_{23}$.

The {\bf character} $\chi(\alpha)$ of a finite dimensional representation $\alpha$ is given by
$$\chi(\alpha)=\sum_{i=1}^{d_\alpha} u^\alpha_{ii}.$$ Note that $\chi(\alpha)$ is independent of choices of representatives of $\alpha$. Also $\|\chi(\alpha)\|\leq d_\alpha$ since $\sum_{k=1}^{d_\alpha} u^\alpha_{ik}(u^\alpha_{ik})^*=1$ for every $1\leq i\leq d_\alpha$. Moreover
$$\chi(\alpha+\beta)=\chi(\alpha)+\chi(\beta),\,\chi(\alpha\beta)=\chi(\alpha)\chi(\beta)\,{\rm and}\,\chi(\alpha)^*=\chi(\bar{\alpha})$$ for finite dimensional representations $\alpha,\beta$.

Every representation of a compact quantum group is a direct sum of irreducible representations. For two finite dimensional representations $\alpha$ and $\beta$, denote the number of copies of $\gamma\in\widehat{G}$ in the  decomposition of $\alpha\beta$ into sum of irreducible representations by $N_{\alpha,\beta}^\gamma$. Hence
$$\alpha\beta=\sum_{\gamma\in \widehat{G}}N_{\alpha,\beta}^\gamma\gamma.$$

We have the Frobenius reciprocity law~\cite[Proposition 3.4.]{Woronowicz1987}~\cite[Example 2.3]{Kyed2008}.
$$N_{\alpha,\beta}^\gamma=N_{\gamma,\bar{\beta}}^\alpha=N_{\bar{\alpha},\gamma}^\beta,$$ for all $\alpha,\beta,\gamma\in \widehat{G}$.

Within the paper, we assume that $A=C(G)$ is a separable $C^*$-algebra, which amounts to say, $\widehat{G}$ is countable.

\medskip

\begin{definition}~\cite[Definition 3.2]{Kyed2008}~\label{boundary}\
Given two finite subsets $S, F$ of $\widehat{G}$, the {\bf boundary} of $F$ relative to $S$, denoted by $\partial_S(F)$, is defined by
\begin{align*}
\partial_S(F)=&\{\alpha\in F\,|\, \exists\,\gamma\in S,\, \beta\notin F,\,{\rm such\, that}\, N_{\alpha,\gamma}^\beta>0\,\}   \\
&\cup\{\alpha\notin F\,|\, \exists\,\gamma\in S, \,\beta\in F,\,{\rm such\, that}\, N_{\alpha,\gamma}^\beta>0\,\}.
\end{align*}
\end{definition}

The {\bf weighted cardinality} $|F|_w$ of a finite subset $F$ of $\widehat{G}$ is given by
$$|F|_w=\sum_{\alpha\in F} d_\alpha^2.$$

D. Keyed proves a compact quantum group $G$ is coamenable iff there exists a F{\o}lner sequence in $\widehat{G}$.

\medskip

\begin{theorem}~[F{\o}lner condition for amenable discrete quantum groups]~\label{Fcondition}~\cite[Corollary 4.10]{Kyed2008}\

A compact quantum group $G$ is coamenable iff there exists a sequence $\{F_n\}_{n=1}^\infty$~(a right F{\o}lner sequence) of finite subsets of $\widehat{G}$ such that
$$\lim_{n\to\infty} \dfrac{|\partial_S(F_n)|_w}{|F_n|_w}=0$$ for every finite nonempty subset $S$ of $\widehat{G}$.
\end{theorem}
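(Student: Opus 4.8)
This is Kyed's characterization of coamenability, so the plan is to reprove the quantum F{\o}lner theorem along classical lines, routing both implications through weak containment of the trivial corepresentation $\mathbf 1$ of $G$ in its regular corepresentation $W$ on $L^2(G,h)$. Recall from \cite{BMT2001} that $G$ is coamenable exactly when $\mathbf 1\prec W$, that is, when there is a net of unit vectors in $L^2(G,h)$ asymptotically fixed by $W$, equivalently when the counit extends to a state on $C_r(G):=\pi_h(A)$ (faithfulness of $h$ there being automatic). The Peter--Weyl decomposition writes $L^2(G,h)=\bigoplus_{\gamma\in\widehat G}(H_\gamma\otimes\overline H_\gamma)$, with the characters $\widehat{\chi(\gamma)}$ forming an orthonormal family indexed by $\widehat G$, and the entire proof is the translation of ``$F$ has small relative boundary $\partial_S(F)$'' into ``the vector supported on the blocks of $F$ is almost $S$-invariant.''

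For F{\o}lner $\Rightarrow$ coamenable: from a right F{\o}lner sequence $\{F_n\}$ put $\xi_n=|F_n|_w^{-1/2}\sum_{\gamma\in F_n}d_\gamma\,\widehat{\chi(\gamma)}$ (the Kac-case choice; otherwise one inserts the modular matrices $Q_\gamma$), a unit vector by the character orthogonality relations. Acting on $\xi_n$ by the piece of $W$ attached to a fixed $\gamma\in\widehat G$ and expanding via the fusion rule $\alpha\gamma=\sum_\beta N_{\alpha,\gamma}^\beta\beta$ together with the orthogonality relations, one finds that the outcome agrees with $\xi_n$ except on the blocks indexed by $\partial_{\{\gamma\}}(F_n)$, so the defect is $O(|\partial_{\{\gamma\}}(F_n)|_w/|F_n|_w)\to 0$. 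Hence $\{\xi_n\}$ is asymptotically $W$-fixed and $G$ is coamenable.

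For coamenable $\Rightarrow$ F{\o}lner: from $\mathbf 1\prec W$ extract almost invariant unit vectors in $\ell^2(\widehat G)=L^2(G,h)$ and pass to the associated positive elements of the Plancherel $\ell^1(\widehat G)$, obtaining a Reiter net, i.e.\ norm-one $f_i\geq 0$ with $\|\gamma\cdot f_i-f_i\|_1\to 0$ for each fixed $\gamma$. A Day-type convexity argument---the norm and weak-$*$ closures of a convex set of states agree---then yields, for every finite $S\subset\widehat G$ and every $\e>0$, a single positive norm-one $f$ with $\sum_{\gamma\in S}\|\gamma\cdot f-f\|_1<\e$. Writing $f$ by the layer-cake formula over its super-level sets $F^{(t)}=\{\alpha\in\widehat G:\Tr(f(\alpha))>t\}$, and using that $\sum_{\gamma\in S}\|\gamma\cdot\mathbf 1_{F}-\mathbf 1_{F}\|_1$ is comparable, with constants depending only on the $d_\gamma$, to $|\partial_S(F)|_w$, a pigeonhole in $t$ produces a finite $F$ with $|\partial_S(F)|_w/|F|_w$ as small as desired; diagonalizing along an exhaustion $S_1\subset S_2\subset\cdots$ of $\widehat G$ then gives the F{\o}lner sequence.

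The hard part is this second direction, and specifically the quantitative match between the analytic Reiter condition and Kyed's combinatorial boundary: one has to verify that the $\ell^1$-distance from the indicator of $F$ to its $\gamma$-translate is, up to constants depending only on $d_\gamma$, the weighted number of blocks that enter or leave $F$ upon tensoring with $\gamma$, namely $|\partial_{\{\gamma\}}(F)|_w$. This is what forces the definition of $\partial_S$ through the multiplicities $N_{\alpha,\gamma}^\beta$, makes essential use of Frobenius reciprocity to keep the left/right bookkeeping consistent, and is the place where the non-Kac modular data $Q_\gamma$ must be estimated; the complete argument is carried out in \cite{Kyed2008}.
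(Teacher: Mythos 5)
The paper does not prove this statement at all: Theorem~\ref{Fcondition} is quoted verbatim from Kyed~\cite[Corollary 4.10]{Kyed2008} and used as a black box, so there is no in-paper argument to compare yours against. Judged on its own terms, your outline follows the standard amenability dictionary (F{\o}lner sets give almost invariant character vectors in $L^2(G,h)$; coamenability gives a Reiter net in $\ell^1(\widehat G)$, then a Day convexity argument and a level-set/pigeonhole argument produce F{\o}lner sets), which is indeed the route taken in the literature (Ruan in the Kac case, Kyed in general). But as a proof it is incomplete, and you say so yourself: the two quantitative hinges are asserted and then deferred to \cite{Kyed2008}, so in substance your proposal is an annotated citation of the very reference the paper cites, not an independent proof.

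Concretely, the gaps are these. (i) In the direction F{\o}lner $\Rightarrow$ coamenable, the claim that applying the $\gamma$-block of the regular corepresentation to $\xi_n=|F_n|_w^{-1/2}\sum_{\alpha\in F_n}d_\alpha\widehat{\chi(\alpha)}$ ``agrees with $\xi_n$ except on the blocks indexed by $\partial_{\{\gamma\}}(F_n)$'' is a real computation: in the non-Kac case the orthogonality relations for matrix coefficients are weighted by quantum dimensions and the modular matrices $Q_\gamma$, not by $d_\gamma$, so the coefficients produced by the fusion rule do not match the classical weights $d_\alpha$ you built into $\xi_n$, and bounding the defect by $|\partial_{\{\gamma\}}(F_n)|_w/|F_n|_w$ with the weights $d_\alpha^2$ of the statement is exactly where the work lies; ``one inserts the modular matrices $Q_\gamma$'' is not yet an argument. (ii) In the converse direction, the assertion that $\sum_{\gamma\in S}\|\gamma\cdot\mathbf{1}_F-\mathbf{1}_F\|_1$ is comparable to $|\partial_S(F)|_w$ with constants depending only on the $d_\gamma$ is precisely the combinatorial content of Kyed's lemmas (Frobenius reciprocity plus control of the dimension weights appearing in the module structure of $\ell^1(\widehat G)$ over the fusion algebra), and it is again the non-Kac bookkeeping that makes it delicate. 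If you intend this as a proof rather than a proof sketch, steps (i) and (ii) must actually be carried out; as it stands, citing Kyed for them is the same move the paper makes, just with more narration.
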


\section{Mean ergodic theorem for amenable discrete quantum groups}\

In this section we prove the generalized mean ergodic theorem.

\medskip

\begin{theorem}~\label{MeanErgodic}
Let $G$ be a coamenable compact quantum group  with counit $\varepsilon$ and  $\{F_n\}_{n=1}^\infty$ be a right F{\o}lner sequence of $\widehat{G}$. For a representation $\pi:A=C(G)\to B(H)$, we have
\begin{equation}~\label{EqCon}
\lim_{n\to\infty}\frac{1}{|F_n|_w}\sum_{\alpha\in F_n}d_\alpha\pi(\chi(\alpha))=P
\end{equation}
under strong operator topology, where $P$ is the orthogonal projection from $H$ onto $\Hinv=\{x\in H\,|\,\pi(a)x=\varepsilon(a) x\, {\rm for\, all}\, a\in A\}$.
\end{theorem}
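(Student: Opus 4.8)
The plan is to run the classical mean-ergodic argument in the fusion-ring picture. Write $T_n := \frac{1}{|F_n|_w}\sum_{\alpha\in F_n} d_\alpha\,\pi(\chi(\alpha)) = \pi(\omega_n)$, where $\omega_n := \frac{1}{|F_n|_w}\sum_{\alpha\in F_n} d_\alpha\,\chi(\alpha)\in\mathcal{A}$, and split $H = \Hinv\oplus\Hinv^{\perp}$. Two facts are immediate. Since $\|\chi(\alpha)\|\le d_\alpha$, one has $\|T_n\|\le\frac{1}{|F_n|_w}\sum_{\alpha\in F_n} d_\alpha^{2} = 1$ for all $n$. And since the counit satisfies $\varepsilon(u^\alpha_{ij}) = \delta_{ij}$ (a consequence of $(\varepsilon\otimes id)\Delta = id$ on $\mathcal{A}$), we get $\varepsilon(\chi(\alpha)) = d_\alpha$, hence $\varepsilon(\omega_n) = 1$, and therefore $T_n x = \pi(\omega_n)x = \varepsilon(\omega_n)x = x$ for every $x\in\Hinv$; that is, $T_n$ agrees with $P$ on $\Hinv$. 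Because $\sup_n\|T_n\| < \infty$, it now suffices to prove $T_n x\to 0$ for $x$ ranging over a subset whose closed linear span is $\Hinv^{\perp}$.

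For this I would use the characterization
$$\Hinv^{\perp} = \overline{\mathrm{span}}\{\,\pi(\chi(\gamma))\xi - d_\gamma\xi \;:\; \gamma\in\widehat{G},\ \xi\in H\,\}.$$
The only nontrivial inclusion is that a vector $z$ orthogonal to every $\pi(\chi(\gamma))\xi - d_\gamma\xi$ lies in $\Hinv$. Taking adjoints and using $\chi(\gamma)^{*} = \chi(\bar\gamma)$, such a $z$ satisfies $\pi(\chi(\gamma))z = d_\gamma z$ for all $\gamma$. Fix $\gamma$ and put $v_i := \pi(u^\gamma_{ii})z$. On one hand $\sum_i v_i = \pi(\chi(\gamma))z = d_\gamma z$; on the other hand, the unitarity relation $\sum_{i,j}(u^\gamma_{ij})^{*} u^\gamma_{ij} = d_\gamma 1$ gives $\sum_{i,j}\|\pi(u^\gamma_{ij})z\|^{2} = d_\gamma\|z\|^{2}$. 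Feeding these into Cauchy--Schwarz, $d_\gamma^{2}\|z\|^{2} = \|\sum_i v_i\|^{2}\le d_\gamma\sum_i\|v_i\|^{2}\le d_\gamma\cdot d_\gamma\|z\|^{2}$, so all inequalities are equalities: the $v_i$ are all equal, forcing $v_i = z$, and simultaneously $\pi(u^\gamma_{ij})z = 0$ for $i\ne j$. Thus $\pi(u^\gamma_{ij})z = \varepsilon(u^\gamma_{ij})z = \delta_{ij}z$ for all $\gamma,i,j$; since matrix coefficients span the dense $*$-subalgebra $\mathcal{A}$ and, by coamenability, $\varepsilon$ is bounded, $z\in\Hinv$.

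It remains to show $T_n\bigl(\pi(\chi(\gamma))\xi - d_\gamma\xi\bigr)\to 0$ for fixed $\gamma\in\widehat{G}$ and $\xi\in H$. Multiplying by $|F_n|_w$ and using $\chi(\alpha)\chi(\gamma) = \chi(\alpha\gamma) = \sum_\beta N^\beta_{\alpha,\gamma}\chi(\beta)$, the vector becomes $\sum_\beta\bigl(c^{(n)}_\beta - d_\gamma d_\beta\,\mathbf{1}_{F_n}(\beta)\bigr)\pi(\chi(\beta))\xi$ with $c^{(n)}_\beta = \sum_{\alpha\in F_n} d_\alpha N^\beta_{\alpha,\gamma}$. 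Frobenius reciprocity gives $N^\beta_{\alpha,\gamma} = N^\alpha_{\beta,\bar\gamma}$, whence $\sum_{\alpha\in\widehat{G}} d_\alpha N^\beta_{\alpha,\gamma} = d_\beta d_{\bar\gamma} = d_\beta d_\gamma$; consequently $c^{(n)}_\beta = d_\gamma d_\beta$ whenever the support of $\beta\bar\gamma$ is contained in $F_n$, and $c^{(n)}_\beta = 0$ whenever that support misses $F_n$. Comparing the two cases $\beta\in F_n$ and $\beta\notin F_n$, the $\beta$-coefficient vanishes unless $\beta\in\partial_{\{\bar\gamma\}}(F_n)$, and in every case its modulus is at most $d_\gamma d_\beta$. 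Since $\|\pi(\chi(\beta))\xi\|\le d_\beta\|\xi\|$, this gives
$$\Bigl\|T_n\bigl(\pi(\chi(\gamma))\xi - d_\gamma\xi\bigr)\Bigr\|\;\le\; d_\gamma\|\xi\|\,\frac{\sum_{\beta\in\partial_{\{\bar\gamma\}}(F_n)} d_\beta^{2}}{|F_n|_w}\;=\; d_\gamma\|\xi\|\,\frac{|\partial_{\{\bar\gamma\}}(F_n)|_w}{|F_n|_w},$$
which tends to $0$ by the F{\o}lner condition (Theorem~\ref{Fcondition}) applied to the finite set $S = \{\bar\gamma\}$. Combining with $T_n|_{\Hinv} = P|_{\Hinv}$ and the uniform bound $\|T_n\|\le 1$ yields $T_n\to P$ in the strong operator topology.

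The step I expect to be the real work is the exact matching of the surviving terms in the telescoped sum with the combinatorial boundary $\partial_{\{\bar\gamma\}}(F_n)$: one must verify both that interior classes $\beta$ cancel precisely (this is exactly where $\sum_\alpha d_\alpha N^\beta_{\alpha,\gamma} = d_\beta d_\gamma$ and Frobenius reciprocity are needed) and that the residual coefficients obey the crude bound $|c^{(n)}_\beta - d_\gamma d_\beta\mathbf{1}_{F_n}(\beta)|\le d_\gamma d_\beta$, so that $|\partial_{\{\bar\gamma\}}(F_n)|_w$ — and not some larger quantity — is what one ultimately divides by $|F_n|_w$. Expressing $\Hinv^{\perp}$ through characters rather than through arbitrary elements of $A$ is the device that keeps this bookkeeping clean, because $\chi$ is a ring homomorphism on the fusion ring of $\widehat{G}$.
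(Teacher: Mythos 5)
Your proposal is correct, and its overall architecture coincides with the paper's: identify $\Hinv$ with the joint eigenspace $\{z: \pi(\chi(\gamma))z=d_\gamma z\}$, describe $\Hinv^{\perp}$ as the closed span of the vectors $\pi(\chi(\gamma))\xi-d_\gamma\xi$ (the paper's Lemma~\ref{purp}), and kill that span in the limit via Frobenius reciprocity, $\sum_\alpha d_\alpha N^\alpha_{\beta,\bar\gamma}=d_\beta d_\gamma$, and the F{\o}lner condition. Where you genuinely deviate is in the two ingredients. For the inclusion of the joint eigenspace into $\Hinv$, the paper goes through Lemmas~\ref{counit} and~\ref{Hiv}: it shows that any \emph{state} $\fai$ with $\fai(\chi(\alpha))=d_\alpha$ equals $\varepsilon$, using Choi's generalized Schwarz inequality for completely positive maps together with a normalized-trace computation, and then exploits multiplicativity of $\varepsilon$ to get $\|\pi(a)x-\varepsilon(a)x\|=0$; you instead argue directly at the vector level with $v_i=\pi(u^\gamma_{ii})z$, the unitarity identity $\sum_{i,j}(u^\gamma_{ij})^*u^\gamma_{ij}=d_\gamma 1$, and equality in Cauchy--Schwarz/triangle inequality, concluding $\pi(u^\gamma_{ij})z=\delta_{ij}z$ and then invoking density of $\mathcal{A}$ plus boundedness of $\varepsilon$ (coamenability). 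This is more elementary (no complete positivity needed), at the price of losing the paper's intermediate statement about states, which is of some independent interest. For the F{\o}lner estimate, you collect the coefficient $c^{(n)}_\beta-d_\gamma d_\beta\mathbf{1}_{F_n}(\beta)$ of each $\pi(\chi(\beta))\xi$ and observe it vanishes off $\partial_{\{\bar\gamma\}}(F_n)$ with modulus at most $d_\gamma d_\beta$, yielding the single clean bound $d_\gamma\|\xi\|\,|\partial_{\{\bar\gamma\}}(F_n)|_w/|F_n|_w$; the paper instead splits $F_n$ into $F_n\setminus\partial_\gamma F_n$ and $F_n\cap\partial_\gamma F_n$, swaps summation indices, and estimates three residual terms involving both $\partial_\gamma F_n$ and $\partial_{\bar\gamma}F_n$ --- the same computation, organized less economically. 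You also make explicit the uniform bound $\|T_n\|\le 1$ needed to pass from the spanning set to all of $\Hinv^{\perp}$, which the paper leaves implicit. All steps you give check out (in particular the boundary bookkeeping against Definition~\ref{boundary} is exactly right), so this is a valid, slightly streamlined alternative proof.
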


We divide the proof into two major steps.

Step 1. We show that $\Hinv=K$ for $K=\{x\in H\,|\,\pi(\chi(\alpha))x=d_\alpha x\, {\rm for\, all}\, \alpha\in \widehat{G}\}.$

Step 2. The sequence $\{\frac{1}{|F_n|_w}\sum_{\alpha\in F_n}d_\alpha\pi(\chi(\alpha))\}_{n=1}^\infty$ converges to the projection from $H$ onto $K$.

\begin{proof}~[Proof of Step 1 for Theorem~\ref{MeanErgodic}]\

\medskip

\begin{lemma}~\label{counit}
If a state $\fai$ on $A=C(G)$ for a compact quantum group $G$ satisfies that $\fai(\chi(\alpha))=d_\alpha$  for all $\alpha\in\widehat{G}$, then $\fai=\varepsilon$.
\end{lemma}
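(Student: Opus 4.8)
The plan is to show that $\varphi$ and $\varepsilon$ agree on every matrix coefficient $u^\alpha_{ij}$ and then conclude by density. The starting point is the row-orthogonality relation $\sum_{k=1}^{d_\alpha} u^\alpha_{ik}(u^\alpha_{ik})^* = 1$ recorded in the preliminaries, which combined with the Cauchy--Schwarz inequality for the state $\varphi$ gives, for each $i$,
$|\varphi(u^\alpha_{ii})|^2 \le \varphi\big(u^\alpha_{ii}(u^\alpha_{ii})^*\big) \le \varphi(1) = 1$, since $1 - u^\alpha_{ii}(u^\alpha_{ii})^* = \sum_{k\ne i} u^\alpha_{ik}(u^\alpha_{ik})^* \ge 0$.

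Next I would feed in the hypothesis $\sum_{i=1}^{d_\alpha}\varphi(u^\alpha_{ii}) = \varphi(\chi(\alpha)) = d_\alpha$. Taking real parts and using $\mathrm{Re}\,\varphi(u^\alpha_{ii}) \le |\varphi(u^\alpha_{ii})| \le 1$ forces $\mathrm{Re}\,\varphi(u^\alpha_{ii}) = |\varphi(u^\alpha_{ii})| = 1$, hence $\varphi(u^\alpha_{ii}) = 1$ for every $i$. In particular equality holds throughout the chain above, so $\varphi\big(\sum_{k\ne i} u^\alpha_{ik}(u^\alpha_{ik})^*\big) = 0$; since each $u^\alpha_{ik}(u^\alpha_{ik})^*$ is positive and $\varphi$ is a state, this gives $\varphi\big(u^\alpha_{ik}(u^\alpha_{ik})^*\big) = 0$ for all $k \ne i$, and a second application of Cauchy--Schwarz yields $\varphi(u^\alpha_{ik}) = 0$. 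Thus $\varphi(u^\alpha_{ij}) = \delta_{ij} = \varepsilon(u^\alpha_{ij})$ for all $\alpha, i, j$.

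Finally, the matrix coefficients $u^\alpha_{ij}$ span the dense unital $*$-subalgebra $\mathcal{A}\subseteq A$ on which $\varepsilon$ is defined, and $\varphi$ is bounded; so $\varphi$ restricts to $\varepsilon$ on $\mathcal{A}$, and by continuity $\varphi = \varepsilon$ on all of $A$ (this also shows in passing that $\varepsilon$ is bounded, consistent with coamenability). I do not expect a serious obstacle; the only point requiring care is exploiting the equality case in Cauchy--Schwarz to kill the off-diagonal coefficients, rather than just bounding them.
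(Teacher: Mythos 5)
Your proof is correct, and it reaches the paper's key identity $\fai(u^\alpha_{ij})=\delta_{ij}$ by a genuinely more elementary route. The paper applies $\fai$ entrywise to the unitary $U=(u^\alpha_{ij})$ and invokes Choi's generalized Schwarz inequality for completely positive maps, $\fai(U)\fai(U^*)\le\fai(UU^*)=1$, then uses the hypothesis in the form $\Tr(\fai(U))=1$ (normalized trace) to compute $\Tr\bigl((\fai(U)-1)(\fai(U)-1)^*\bigr)=0$, which yields $\fai(U)=1$ in a single matrix computation. You instead argue scalar by scalar: the row relation $\sum_k u^\alpha_{ik}(u^\alpha_{ik})^*=1$ together with the ordinary Cauchy--Schwarz inequality for states gives $|\fai(u^\alpha_{ii})|\le 1$; the hypothesis $\fai(\chi(\alpha))=d_\alpha$ saturates these bounds and forces $\fai(u^\alpha_{ii})=1$; and the equality case then gives $\fai\bigl(u^\alpha_{ik}(u^\alpha_{ik})^*\bigr)=0$ for $k\neq i$, whence $\fai(u^\alpha_{ik})=0$ by a second Cauchy--Schwarz. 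Both proofs finish the same way: $\fai(u^\alpha_{ij})=\delta_{ij}=\varepsilon(u^\alpha_{ij})$ on the dense $*$-subalgebra $\mathcal{A}$ spanned by matrix coefficients, so $\fai=\varepsilon$ (and, as you note, $\varepsilon$ is then bounded). What each buys: your version is self-contained, using only the Cauchy--Schwarz inequality for states and positivity, and makes the equality analysis that kills the off-diagonal entries explicit; the paper's version is shorter, treats the whole matrix at once, and works verbatim for any unital completely positive map in place of a state. The only point worth recording explicitly in a written version is the standard fact $\varepsilon(u^\alpha_{ij})=\delta_{ij}$ (equivalently $\varepsilon(\chi(\alpha))=d_\alpha$), which the paper quotes from Woronowicz.
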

\begin{proof}
It suffices to show that $\fai(u^\alpha_{ij})=\delta_{ij}$ for every $\alpha\in\widehat{G}$ and an arbitrary unitary $U=(u^\alpha_{ij})_{1\leq i,j\leq d_\alpha}\in\alpha$.

Let $\fai(U)$ be the matrix $(\fai(u^\alpha_{ij}))$ in $M_{d_\alpha}(\mathbb{C})$. Note that $\fai$ is  a state, hence completely positive. By a generalized Schwarz inequality of M. Choi~\cite[Corollary 2.8]{Choi1974}, we have
$$\fai(U)\fai(U^*)\leq \fai(UU^*)=1.$$ Let $\Tr$ be the normalized trace of $M_{d_\alpha}(\mathbb{C})$. Since $\fai(\chi(\alpha))=d_\alpha$, we get $\Tr(\fai(U))=1$. It follows that
\begin{align*}
&0\leq \Tr((\fai(U)-1)(\fai(U)-1)^*)=\Tr(\fai(U)\fai(U)^*-\fai(U)^*-\fai(U)+1)  \\
&=\Tr(\fai(U)\fai(U)^*)-1=\Tr(\fai(U)\fai(U^*))-1\leq \Tr(\fai(UU^*))-1=0.
\end{align*}
Hence $\Tr((\fai(U)-1)(\fai(U)-1)^*)=0$ which implies that $\fai(U)=1$. This ends the proof.
\end{proof}

\medskip

\begin{lemma}~\label{Hiv}
Let $\pi:A=C(G)\to B(H)$ be a representation. Then $\Hinv=K=\{x\in H\,|\,\pi(\chi(\alpha))x=d_\alpha x\, {\rm for\, all}\, \alpha\in \widehat{G}\}.$
\end{lemma}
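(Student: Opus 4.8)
The plan is to prove the two inclusions $\Hinv\subseteq K$ and $K\subseteq\Hinv$ separately; the first is immediate and the second is where Lemma~\ref{counit} does the work. For $\Hinv\subseteq K$, I would first record the standard identity $\varepsilon(u^\alpha_{ij})=\delta_{ij}$ for every $\alpha\in\widehat{G}$: applying $\varepsilon\otimes id$ to $\Delta(u^\alpha_{ij})=\sum_k u^\alpha_{ik}\otimes u^\alpha_{kj}$ gives $\sum_k\varepsilon(u^\alpha_{ik})u^\alpha_{kj}=u^\alpha_{ij}$, and the $u^\alpha_{kj}$ are linearly independent. Summing over $i=j$ yields $\varepsilon(\chi(\alpha))=d_\alpha$, so any $x\in\Hinv$ satisfies $\pi(\chi(\alpha))x=\varepsilon(\chi(\alpha))x=d_\alpha x$, i.e. $x\in K$.

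For the reverse inclusion, take $x\in K$; the case $x=0$ is trivial, so after rescaling assume $\|x\|=1$ (note $K$ is a linear subspace). Form the vector state $\fai(a)=\langle\pi(a)x,x\rangle$ on $A$. Since $x\in K$, we get $\fai(\chi(\alpha))=d_\alpha\langle x,x\rangle=d_\alpha$ for all $\alpha\in\widehat{G}$, so Lemma~\ref{counit} forces $\fai=\varepsilon$. The key move is then a Hilbert space computation: expand $\|\pi(a)x-\varepsilon(a)x\|^2$ into four inner products, identify three of them with $\fai(a^*a)=\varepsilon(a^*a)$, $\fai(a)=\varepsilon(a)$ and $\overline{\fai(a)}=\overline{\varepsilon(a)}$, and use that $\varepsilon$ is a multiplicative $*$-functional (so $\varepsilon(a^*a)=|\varepsilon(a)|^2$) to see that the whole expression collapses to $0$. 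Hence $\pi(a)x=\varepsilon(a)x$ for every $a\in A$, i.e. $x\in\Hinv$, which gives $K\subseteq\Hinv$ and finishes the proof.

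I do not expect a genuine obstacle here; the only point deserving care is that $\varepsilon(a^*a)=\overline{\varepsilon(a)}\varepsilon(a)$ be meaningful and valid on the relevant domain. In the coamenable setting, which is where the lemma is applied inside Theorem~\ref{MeanErgodic}, $\varepsilon$ is a genuine character $A\to\mathbb{C}$ and this is automatic. If one instead wants the statement for an arbitrary compact quantum group, one reads $\Hinv$ through the densely defined $\varepsilon$ on $\mathcal{A}$ and observes that $\varepsilon$ is $*$-preserving there as well, since $\varepsilon$ evaluated on the matrix coefficients of the conjugate representation $\overline{U^\alpha}$ again returns the identity matrix.
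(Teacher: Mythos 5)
Your proposal is correct and follows essentially the same route as the paper: the containment $\Hinv\subseteq K$ from $\varepsilon(\chi(\alpha))=d_\alpha$, and for $K\subseteq\Hinv$ the vector state $\fai(a)=\langle\pi(a)x,x\rangle$, Lemma~\ref{counit} to identify $\fai=\varepsilon$, and the expansion of $\|\pi(a)x-\varepsilon(a)x\|^2$ using multiplicativity of $\varepsilon$. The only difference is cosmetic: you derive $\varepsilon(u^\alpha_{ij})=\delta_{ij}$ from the coproduct identity, where the paper simply cites Woronowicz's formula $\varepsilon(\chi(\alpha))=d_\alpha$.
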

\begin{proof}
Note that $\varepsilon(\chi(\alpha))=d_\alpha$ for all $\alpha\in\widehat{G}$~\cite[Formula (5.11)]{Woronowicz1998}. Hence $\Hinv\subseteq K$.

To show $K\subseteq \Hinv$, we can assume $K\neq 0$ without loss of generality.

Let $x\in K$ be an arbitrarily chosen unit vector. By Lemma~\ref{counit}, the state $\fai_x$ defined by $\fai_x(a)=\langle\pi(a)x,x\rangle$ for all $a\in A$ is $\varepsilon$ since $\fai_x(\chi(\alpha))=d_\alpha$ for all $\alpha\in\widehat{G}$.

For every $a\in A$, we have
\begin{align*}
&\|\pi(a)x-\varepsilon(a)x\|^2=\langle \pi(a)x-\varepsilon(a)x, \pi(a)x-\varepsilon(a)x \rangle \\
&=\langle \pi(a)x, \pi(a)x \rangle-\langle\varepsilon(a)x, \pi(a)x\rangle-\langle \pi(a)x, \varepsilon(a)x \rangle+\langle \varepsilon(a)x,\varepsilon(a)x \rangle  \\
&=\langle \pi(a^*a)x, x \rangle-\langle\varepsilon(a)\pi(a^*)x, x\rangle-\overline{\varepsilon(a)}\langle \pi(a)x, x \rangle+|\varepsilon(a)|^2 \\
&=\varepsilon(a^*a)-\varepsilon(a)\varepsilon(a^*)-|\varepsilon(a)|^2+|\varepsilon(a)|^2=0.
\end{align*}
This proves that $K\subseteq \Hinv$.
\end{proof}
This finishes  proof of Step 1.
\end{proof}

\begin{proof}~[Proof of Step 2 for Theorem~\ref{MeanErgodic}]\

\medskip

\begin{lemma}~\label{purp}
The orthogonal complement $\Hinv^\bot$ of $\Hinv$ is
$$V:=\overline{{\rm Span}\{\pi(\chi(\alpha))x-d_\alpha x\,|\,{\rm for\, all}\, \alpha\in \widehat{G},\,x\in H\}}.$$
\end{lemma}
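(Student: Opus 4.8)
The plan is to compute the orthogonal complement $V^{\bot}$ explicitly, recognize it as $\Hinv$, and then dualize. First I would unwind the definition: a vector $y\in H$ lies in $V^{\bot}$ exactly when $\langle\pi(\chi(\alpha))x-d_\alpha x,y\rangle=0$ for every $\alpha\in\widehat{G}$ and every $x\in H$. Shifting the inner product onto the adjoint, this is equivalent to $\langle x,(\pi(\chi(\alpha))^{*}-d_\alpha)y\rangle=0$ for all $x\in H$, hence to $\pi(\chi(\alpha))^{*}y=d_\alpha y$ for every $\alpha\in\widehat{G}$.

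Next I would feed in the structural facts recalled in Section~2. Since $\pi$ is a $*$-homomorphism and $\chi(\alpha)^{*}=\chi(\bar\alpha)$, we have $\pi(\chi(\alpha))^{*}=\pi(\chi(\bar\alpha))$, and $d_{\bar\alpha}=d_\alpha$. So $y\in V^{\bot}$ if and only if $\pi(\chi(\bar\alpha))y=d_{\bar\alpha}y$ for all $\alpha\in\widehat{G}$; because $\alpha\mapsto\bar\alpha$ is a bijection of $\widehat{G}$, this is the same as $\pi(\chi(\beta))y=d_\beta y$ for all $\beta\in\widehat{G}$, i.e.\ $y\in K$. By Lemma~\ref{Hiv}, $K=\Hinv$, so $V^{\bot}=\Hinv$.

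Finally, $V$ is by construction a closed subspace of $H$, so $V=(V^{\bot})^{\bot}=\Hinv^{\bot}$, which is exactly the claim.

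I do not expect a real obstacle here; the one place to be careful is the passage through adjoints, where $\pi(\chi(\alpha))$ is in general not self-adjoint, so one genuinely needs the identity $\chi(\alpha)^{*}=\chi(\bar\alpha)$ together with the invariance of the dimension and of $\widehat{G}$ under conjugation, rather than any attempt to manipulate $\pi(\chi(\alpha))$ by itself.
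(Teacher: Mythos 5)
Your proof is correct and is essentially the paper's argument: the paper invokes the standard kernel--range duality (Proposition~\ref{prop:purp}) for the self-adjoint family $\{\pi(\chi(\alpha))-d_\alpha\}_{\alpha\in\widehat{G}}$, while you prove that same duality inline by computing $V^{\bot}=K$ and taking double orthocomplements, using the identical key facts $\chi(\alpha)^{*}=\chi(\bar\alpha)$, $d_{\bar\alpha}=d_\alpha$, the bijectivity of $\alpha\mapsto\bar\alpha$, and $K=\Hinv$ from Lemma~\ref{Hiv}.
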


We need the following well-known fact in functional analysis.

\medskip

\begin{proposition}~\label{prop:purp}
Suppose $\{T_j\}_J$ is a family of bounded operators on a Hilbert space $H$. Then the orthogonal complement of $\bigcap_{j\in J}\ker{T_j}$ is $\overline{{\rm ran}\{T_j^*|j\in J\}}$, the closed linear span of the ranges ${\rm ran}T_j^*$ of $T_j^*$ for all $j$ in $J$.
\end{proposition}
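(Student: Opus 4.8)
The plan is to prove Proposition~\ref{prop:purp} by reducing the general family to a single operator and then proving the single-operator case directly.

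First I would reduce to a single operator. Given the family $\{T_j\}_{j\in J}$, observe that a vector $x$ lies in $\bigcap_{j\in J}\ker T_j$ if and only if $T_jx=0$ for every $j$, and that the closed linear span $\overline{\mathrm{ran}\{T_j^*\mid j\in J\}}$ is by definition the smallest closed subspace containing every $\mathrm{ran}\,T_j^*$. Since orthogonal complementation reverses inclusions and turns intersections of closed subspaces into closed spans of the individual complements (using $\left(\bigcap_j M_j\right)^\perp = \overline{\sum_j M_j^\perp}$ for closed subspaces $M_j$), it suffices to show that for each single bounded operator $T$ on $H$ one has
\[
(\ker T)^\perp = \overline{\mathrm{ran}\,T^*}.
\]

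The single-operator statement I would prove via the standard adjoint identity. First I show $\ker T = (\mathrm{ran}\,T^*)^\perp$: a vector $x$ satisfies $Tx=0$ iff $\langle Tx, y\rangle = 0$ for all $y\in H$ iff $\langle x, T^*y\rangle = 0$ for all $y\in H$ iff $x\perp \mathrm{ran}\,T^*$, and since the orthogonal complement of a set equals that of its closed linear span, this is iff $x\perp \overline{\mathrm{ran}\,T^*}$. Thus $\ker T = \left(\overline{\mathrm{ran}\,T^*}\right)^\perp$. Taking orthogonal complements of both sides and using the fact that $M^{\perp\perp} = M$ for a closed subspace $M$ (here $M = \overline{\mathrm{ran}\,T^*}$, which is closed by construction), I conclude $(\ker T)^\perp = \overline{\mathrm{ran}\,T^*}$, completing the single-operator case and hence, via the reduction above, the proposition.

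I do not expect any genuine obstacle here, since this is a classical fact; the only point requiring a little care is the bookkeeping in the reduction step, namely justifying $\left(\bigcap_{j\in J}\ker T_j\right)^\perp = \overline{\sum_{j\in J}(\ker T_j)^\perp}$ for an arbitrary (possibly uncountable) index set $J$. This follows because $\bigcap_{j}\ker T_j = \bigcap_j \left(\overline{\mathrm{ran}\,T_j^*}\right)^\perp = \left(\overline{\sum_j \mathrm{ran}\,T_j^*}\right)^\perp = \left(\overline{\mathrm{ran}\{T_j^*\mid j\in J\}}\right)^\perp$, where the middle equality uses that a vector is orthogonal to every $\mathrm{ran}\,T_j^*$ precisely when it is orthogonal to their combined closed span; taking complements once more and invoking $M^{\perp\perp}=M$ for the closed subspace $M = \overline{\mathrm{ran}\{T_j^*\mid j\in J\}}$ gives the claim.
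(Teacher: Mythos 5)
Your proof is correct. Note that the paper itself offers no proof of this proposition---it is stated as a well-known fact and immediately applied to the family $\{\pi(\chi(\alpha))-d_\alpha\}_{\alpha\in\widehat{G}}$---so your argument simply supplies the standard justification: the identity $\ker T=\bigl(\mathrm{ran}\,T^*\bigr)^\perp$ together with $M^{\perp\perp}=M$ for the closed span $M=\overline{\mathrm{ran}\{T_j^*\mid j\in J\}}$. The only stylistic remark is that your opening ``reduction'' via $\bigl(\bigcap_j M_j\bigr)^\perp=\overline{\sum_j M_j^\perp}$ is redundant, since your final paragraph reproves exactly that point directly; the last paragraph alone, combined with the single-operator identity, is the whole proof.
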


\begin{proof}[Proof of Lemma~\ref{purp}]\

 Consider the family of operators $\{\pi(\chi(\alpha))-d_\alpha\}_{\alpha\in\widehat{G}}$ in $B(H)$. Note that $(\pi(\chi(\alpha))-d_\alpha)^*=\pi(\chi(\bar{\alpha}))-d_{\bar{\alpha}}$, so $\{\pi(\chi(\alpha)-d_\alpha\}_{\alpha\in\widehat{G}}$ is self-adjoint. Applying Proposition~\ref{prop:purp} to $\{\pi(\chi(\alpha)-d_\alpha\}_{\alpha\in\widehat{G}}$ gives the proof.
\end{proof}
Now we are ready  to prove Theorem~\ref{MeanErgodic}.

For every $x\in \Hinv$ and all $n$, we have
$$\frac{1}{|F_n|_w}\sum_{\alpha\in F_n}d_\alpha\pi(\chi(\alpha))x=\frac{1}{|F_n|_w}\sum_{\alpha\in F_n}d_\alpha^2x=x.$$

Next we show that $$\frac{1}{|F_n|_w}\sum_{\alpha\in F_n}d_\alpha\pi(\chi(\alpha))z\to 0$$ for all $z\in V$ as $n\to\infty$. By Lemma~\ref{purp}, we only need to prove it for $z$ of the form $\pi(\chi(\gamma))y-d_\gamma y$ for every $y\in H$ and $\gamma\in\widehat{G}$.

For every $y\in H$ and $\gamma\in \widehat{G}$, we have
\begin{align*}
&\lim_{n\to\infty} \frac{1}{|F_n|_w}\sum_{\alpha\in F_n}d_\alpha\pi(\chi(\alpha))(\pi(\chi(\gamma))y-d_\gamma y)   \\
&=\lim_{n\to\infty} \frac{1}{|F_n|_w}(\sum_{\alpha\in F_n\setminus\partial_\gamma F_n}+\sum_{\alpha\in F_n\cap\partial_\gamma F_n})d_\alpha\pi(\chi(\alpha)\chi(\gamma))y-d_\alpha d_\gamma \pi(\chi(\alpha))y \\
\tag{Theorem~\ref{Fcondition}\quad and \,$\chi(\alpha)\chi(\gamma)=\chi(\alpha\gamma)$}
&=\lim_{n\to\infty} \frac{1}{|F_n|_w}\sum_{\alpha\in F_n\setminus\partial_\gamma F_n}d_\alpha\pi(\chi(\alpha\gamma))y-d_\alpha d_\gamma\pi(\chi(\alpha)) y  \\
\tag{$\alpha\gamma=\sum_{\beta\in F_n}N_{\alpha,\gamma}^\beta\beta$ \,when \, $\alpha\in F_n\setminus\partial_\gamma F_n$}
&=\lim_{n\to\infty} \frac{1}{|F_n|_w}(\sum_{\alpha\in F_n\setminus\partial_\gamma F_n}\sum_{\beta\in F_n}d_\alpha N_{\alpha,\gamma}^\beta\pi(\chi(\beta))y-\sum_{\alpha\in F_n\setminus\partial_\gamma F_n}d_\alpha d_\gamma \pi(\chi(\alpha))y)  \\
\tag{$N_{\alpha,\gamma}^\beta=N_{\beta,\bar{\gamma}}^\alpha$\, and $d_\gamma=d_{\bar{\gamma}}$}
&=\lim_{n\to\infty} \frac{1}{|F_n|_w}(\sum_{\alpha\in F_n\setminus\partial_\gamma F_n}\sum_{\beta\in F_n}d_\alpha N_{\beta,\bar{\gamma}}^\alpha\pi(\chi(\beta))y-\sum_{\alpha\in F_n\setminus\partial_\gamma F_n}d_\alpha d_{\bar{\gamma}} \pi(\chi(\alpha))y) \\
&=\lim_{n\to\infty} \frac{1}{|F_n|_w}(\sum_{\alpha\in F_n\setminus\partial_\gamma F_n}\sum_{\beta\in F_n}d_\alpha N_{\beta,\bar{\gamma}}^\alpha\pi(\chi(\beta))y-\sum_{\alpha\in F_n\setminus\partial_\gamma F_n}[\sum_{\beta\in F_n}+\sum_{\beta\notin F_n}] N_{\alpha,\bar{\gamma}}^\beta d_\beta \pi(\chi(\alpha))y)  \\
\tag{Exchange $\alpha$ and $\beta$ in the second term.}
&=\lim_{n\to\infty} \frac{1}{|F_n|_w}(\sum_{\alpha\in F_n\setminus\partial_\gamma F_n}\sum_{\beta\in F_n}d_\alpha N_{\beta,\bar{\gamma}}^\alpha\pi(\chi(\beta))y-\sum_{\beta\in F_n\setminus\partial_\gamma F_n}[\sum_{\alpha\in F_n}+\sum_{\alpha\notin F_n}] N_{\beta,\bar{\gamma}}^\alpha d_\alpha \pi(\chi(\beta))y)  \\
\tag{Common terms are cancelled.}
&=\lim_{n\to\infty} \frac{1}{|F_n|_w}(\sum_{\alpha\in F_n\setminus\partial_\gamma F_n}\sum_{\beta\in F_n\cap\partial_{\gamma} F_n}d_\alpha N_{\beta,\bar{\gamma}}^\alpha\pi(\chi(\beta))y   \\
&-\sum_{\beta\in F_n\setminus\partial_\gamma F_n}\sum_{\alpha\in F_n\cap\partial_{\gamma} F_n} N_{\beta,\bar{\gamma}}^\alpha d_\beta \pi(\chi(\beta))y-\sum_{\beta\in F_n\setminus\partial_\gamma F_n}\sum_{\alpha\notin F_n} N_{\beta,\bar{\gamma}}^\alpha d_\alpha \pi(\chi(\beta))y)  \\
&=0.
\end{align*}
Note that the last equality above holds since by Theorem~\ref{Fcondition}, we have the following.
\begin{enumerate}
\item
\begin{align*}
& \frac{1}{|F_n|_w}\|\sum_{\alpha\in F_n\setminus\partial_\gamma F_n}\sum_{\beta\in F_n\cap\partial_{\gamma} F_n}d_\alpha N_{\beta,\bar{\gamma}}^\alpha\pi(\chi(\beta))y\|   \\
&\leq  \frac{1}{|F_n|_w}\sum_{\beta\in F_n\cap\partial_{\gamma} F_n}\sum_{\alpha\in F_n}d_\alpha N_{\beta,\bar{\gamma}}^\alpha d_\beta \|y\| \\
&\leq  \frac{1}{|F_n|_w}\sum_{\beta\in F_n\cap\partial_{\gamma} F_n} d_\beta^2 d_{\bar{\gamma}}\|y\|\to 0;
\end{align*}
\item
\begin{align*}
& \frac{1}{|F_n|_w}\|\sum_{\beta\in F_n\setminus\partial_\gamma F_n}\sum_{\alpha\in F_n\cap\partial_{\gamma} F_n}N_{\beta,\bar{\gamma}}^\alpha d_\alpha \pi(\chi(\beta))y\|   \\
&\leq\frac{1}{|F_n|_w}\sum_{\beta\in F_n\setminus\partial_\gamma F_n}\sum_{\alpha\in F_n\cap\partial_{\gamma} F_n}N_{\beta,\bar{\gamma}}^\alpha d_\alpha  d_\beta \|y\| \\
&=\frac{1}{|F_n|_w}\sum_{\beta\in F_n\setminus\partial_\gamma F_n}\sum_{\alpha\in F_n\cap\partial_{\gamma} F_n}N_{\alpha,\gamma}^\beta d_\alpha  d_\beta \|y\| \\
&\leq \frac{1}{|F_n|_w}\sum_{\alpha\in F_n\cap\partial_{\gamma} F_n} d_\alpha^2d_\gamma \|y\|\to 0;
\end{align*}
\item
\begin{align*}
& \frac{1}{|F_n|_w}\|\sum_{\beta\in F_n\setminus\partial_\gamma F_n}\sum_{\alpha\notin F_n} N_{\beta,\bar{\gamma}}^\alpha d_\alpha \pi(\chi(\beta))y\|   \\
&\leq\frac{1}{|F_n|_w}\sum_{\beta\in F_n\setminus\partial_\gamma F_n}\sum_{\alpha\notin F_n} N_{\beta,\bar{\gamma}}^\alpha d_\alpha d_\beta \|y\|\\
&=\frac{1}{|F_n|_w}\sum_{\beta\in F_n\setminus\partial_\gamma F_n}\sum_{\alpha\notin F_n,\,N_{\beta,\bar{\gamma}}^\alpha>0} N_{\beta,\bar{\gamma}}^\alpha d_\alpha d_\beta \|y\|   \\
&\leq \frac{1}{|F_n|_w}\sum_{\beta\in \partial_{\bar{\gamma}} F_n}\sum_{\alpha\in\widehat{G}} N_{\beta,\bar{\gamma}}^\alpha d_\alpha d_\beta  \|y\|\\
&=\frac{1}{|F_n|_w}\sum_{\beta\in \partial_{\bar{\gamma}} F_n} d_\beta^2d_{\bar{\gamma}}\|y\|\to 0
\end{align*}
as $n\to\infty$.
\end{enumerate}
This completes  proof of Step 2.
\end{proof}

For a representation $\pi:B\to B(H)$ of a unital $C^*$-algebra $B$, define the {\bf commutant} $\pi(B)'$ of $\pi(B)$ by
$$\pi(B)'=\{T\in B(H)\,|T\pi(b)=\pi(b)T\,  {\rm for\, all}\, b\in B\}.$$

\medskip

\begin{corollary}~\label{central}
In the setting of Theorem~\ref{MeanErgodic}, the projection $P$ is in $\pi(A)'\cap\overline{\pi(A)}^{SOT}$.
\end{corollary}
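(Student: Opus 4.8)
The plan is to establish the two memberships separately, reading both off directly from the mean ergodic convergence in Theorem~\ref{MeanErgodic}. Write $P_n=\frac{1}{|F_n|_w}\sum_{\alpha\in F_n}d_\alpha\pi(\chi(\alpha))$, so that $P_n\to P$ in SOT. Since each $\chi(\alpha)\in A$, every $P_n$ lies in $\pi(A)$, hence $P$ lies in the SOT-closure $\overline{\pi(A)}^{\rm SOT}$ by definition of that closure as the set of SOT-limits of elements of $\pi(A)$. That disposes of the second membership immediately and requires no calculation.

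For the first membership, $P\in\pi(A)'$, I would fix an arbitrary $b\in A$ and show $P\pi(b)=\pi(b)P$. The key observation is that each $P_n$ already commutes with $\pi(A)$: indeed $P_n$ is a scalar combination of the operators $\pi(\chi(\alpha))$, and the element $h_n:=\frac{1}{|F_n|_w}\sum_{\alpha\in F_n}d_\alpha\chi(\alpha)$ need not be central in $A$, so commutation of $P_n$ with $\pi(b)$ is \emph{not} automatic. Therefore the honest route is to pass to the limit on both sides. For each unit vector $x\in H$ and each $b\in A$, note that $P_n\pi(b)x\to P\pi(b)x$ in norm (this is just SOT-convergence of $P_n$ applied to the fixed vector $\pi(b)x$), while $\pi(b)P_nx\to\pi(b)Px$ in norm, using that $\pi(b)$ is bounded and continuous together with $P_nx\to Px$. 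Hence the difference $P_n\pi(b)x-\pi(b)P_nx$ converges to $P\pi(b)x-\pi(b)Px$; it suffices to show each term $(P_n\pi(b)-\pi(b)P_n)x$ tends to zero.

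The main obstacle is exactly this last point, since $P_n$ is generally not in the commutant of $\pi(A)$ for finite $n$. The cleanest way around it is to characterize $P$ intrinsically rather than through commutation of the approximants: by Theorem~\ref{MeanErgodic}, $P$ is the orthogonal projection onto $\Hinv=\{x\in H\mid \pi(a)x=\varepsilon(a)x\text{ for all }a\in A\}$. I would verify directly that both $\Hinv$ and its orthogonal complement are invariant under $\pi(b)$ for every $b\in A$, from which $P\pi(b)=\pi(b)P$ follows by the standard fact that a projection commutes with an operator iff its range and kernel are both invariant. Invariance of $\Hinv$ under $\pi(b)$ is the crux: for $x\in\Hinv$ and any $a\in A$ one computes $\pi(a)\pi(b)x=\pi(ab)x=\varepsilon(ab)x=\varepsilon(a)\varepsilon(b)x=\varepsilon(a)\pi(b)x$, using that $\varepsilon$ is multiplicative on $A$ (it is the bounded, hence everywhere-defined, counit of the coamenable $G$) and that $x\in\Hinv$ gives $\pi(ab)x=\varepsilon(ab)x$; thus $\pi(b)x\in\Hinv$. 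For the complement $V=\Hinv^\bot$, invariance under $\pi(b)$ follows from invariance of $\Hinv$ under $\pi(b^*)$ together with self-adjointness of the relation, since $\langle \pi(b)z,w\rangle=\langle z,\pi(b^*)w\rangle=0$ whenever $z\in V$ and $w\in\Hinv$ (because $\pi(b^*)w\in\Hinv$). Having shown both subspaces are $\pi(b)$-invariant for every $b\in A$, we conclude $P\in\pi(A)'$, and combined with the first paragraph this gives $P\in\pi(A)'\cap\overline{\pi(A)}^{\rm SOT}$.
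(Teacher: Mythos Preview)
Your proof is correct and takes essentially the same approach as the paper: for $P\in\overline{\pi(A)}^{\rm SOT}$ both you and the paper simply observe that $P$ is an SOT-limit of elements of $\pi(A)$, and for $P\in\pi(A)'$ the paper computes $\langle \pi(a)Px,y\rangle=\varepsilon(a)\langle Px,y\rangle=\langle P\pi(a)x,y\rangle$ directly using $Px,Py\in\Hinv$, which is exactly your invariance-of-$\Hinv$ argument phrased in inner-product form. (Your intermediate discussion of whether $P_n$ commutes with $\pi(b)$ is correctly flagged as a dead end and not needed; the paper likewise ignores $P_n$ for the commutant claim.)
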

\begin{proof}
The left hand side of Equation~\ref{EqCon} is in $\overline{\pi(A)}^{SOT}$, so is $P$. Moreover for all $x,y\in H$ and $a\in A$, we have
$$\langle \pi(a)Px,y\rangle=\varepsilon(a)\langle Px,y\rangle$$ and
\begin{align*}
&\langle P\pi(a)x,y\rangle=\langle \pi(a)x,Py\rangle=\langle x,\pi(a^*)Py\rangle \\
&=\langle x,\pi(a^*)Py\rangle=\langle x,\varepsilon(a^*)Py\rangle=\varepsilon(a)\langle Px,y\rangle.
\end{align*}
This proves $P\in \pi(A)'$.
\end{proof}

As a consequence, we have the following.

\medskip

\begin{corollary}~\label{PureStates}
Assume that $\fai$ is a pure state on $A=C(G)$ for a coamenable compact quantum group $G$ and $\{F_n\}_{n=1}^\infty$ is a right F{\o}lner sequence of $\widehat{G}$. Then
\[ \lim_{n\to\infty}\frac{1}{|F_n|_w}\sum_{\alpha\in F_n}d_\alpha\fai(\chi(\alpha))= \left\{
  \begin{array}{l l}
    1 & \quad \text{if $\fai=\varepsilon,$}  \\
    0 & \quad \text{if $\fai\neq\varepsilon.$}

  \end{array} \right.\]
\end{corollary}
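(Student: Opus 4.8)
The plan is to deduce Corollary~\ref{PureStates} directly from Theorem~\ref{MeanErgodic} by passing through the GNS representation attached to the pure state $\fai$. First I would form the GNS triple $(L^2(A,\fai), \pi_\fai, \xi_\fai)$, where $\xi_\fai = \widehat{1_A}$ is the canonical cyclic unit vector and $\fai(a) = \langle \pi_\fai(a)\xi_\fai, \xi_\fai\rangle$ for all $a \in A$. Then for each $n$,
\[
\frac{1}{|F_n|_w}\sum_{\alpha\in F_n}d_\alpha\fai(\chi(\alpha)) = \Big\langle \Big(\tfrac{1}{|F_n|_w}\sum_{\alpha\in F_n}d_\alpha\pi_\fai(\chi(\alpha))\Big)\xi_\fai,\ \xi_\fai\Big\rangle .
\]
By Theorem~\ref{MeanErgodic} applied to $\pi = \pi_\fai$, the operators inside the bracket converge in the strong operator topology to $P$, the orthogonal projection of $L^2(A,\fai)$ onto $\Hinv = \{x : \pi_\fai(a)x = \varepsilon(a)x \text{ for all } a\}$; in particular they converge weakly, so the scalars above converge to $\langle P\xi_\fai,\xi_\fai\rangle = \|P\xi_\fai\|^2$. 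Thus the stated limit exists and equals $\|P\xi_\fai\|^2 \in [0,1]$, and it only remains to show this equals $1$ when $\fai=\varepsilon$ and $0$ when $\fai\neq\varepsilon$.

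For the case $\fai = \varepsilon$: the counit $\varepsilon$ is a character, so its GNS space is one-dimensional, $\pi_\varepsilon(a) = \varepsilon(a)\,\mathrm{id}$, whence $\Hinv$ is the whole space and $P$ is the identity; therefore $\|P\xi_\fai\|^2 = 1$. (Alternatively, one invokes $\varepsilon(\chi(\alpha)) = d_\alpha$, so each averaged term is already $\tfrac{1}{|F_n|_w}\sum_{\alpha\in F_n}d_\alpha^2 = 1$.) For the case $\fai \neq \varepsilon$: the point is to show $\xi_\fai \perp \Hinv$, i.e.\ $P\xi_\fai = 0$. If instead $P\xi_\fai \neq 0$, I would exploit purity of $\fai$. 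By Corollary~\ref{central}, $P$ lies in the commutant $\pi_\fai(A)'$; but $\fai$ is pure, so its GNS representation is irreducible and $\pi_\fai(A)' = \mathbb{C}\,\mathrm{id}$. Hence $P$ is a scalar projection, forcing $P = \mathrm{id}$ (as $P\neq 0$), so every vector — in particular $\xi_\fai$ — lies in $\Hinv$, which means $\pi_\fai(a)\xi_\fai = \varepsilon(a)\xi_\fai$ for all $a$. Pairing with $\xi_\fai$ gives $\fai(a) = \varepsilon(a)$ for all $a$, contradicting $\fai\neq\varepsilon$. Therefore $P\xi_\fai = 0$ and the limit is $0$.

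I expect the only subtle point to be the (standard) fact that a pure state has an irreducible GNS representation, and the identification of $\pi_\fai(A)'$ with the scalars; everything else is bookkeeping with the weak convergence supplied by Theorem~\ref{MeanErgodic} and the containment $P \in \pi(A)'$ from Corollary~\ref{central}. One small care point is that Theorem~\ref{MeanErgodic} is stated for SOT convergence, which of course implies WOT convergence, and it is only the latter that is needed here; and one should note $\xi_\fai$ is a unit vector so that the limiting scalar is genuinely $\|P\xi_\fai\|^2$ rather than some unnormalized quantity. No further estimates are required.
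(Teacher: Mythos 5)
Your proof is correct and follows essentially the same route as the paper: pass to the GNS representation, use the WOT consequence of Theorem~\ref{MeanErgodic} to identify the limit as $\langle P\hat{1},\hat{1}\rangle$, and then combine Corollary~\ref{central} with the irreducibility of the GNS representation of a pure state to force $P\hat{1}=0$ when $\fai\neq\varepsilon$. The only cosmetic difference is that you argue via $\pi_\fai(A)'=\mathbb{C}\,\mathrm{id}$ (so $P$ is $0$ or the identity), whereas the paper phrases the same step as $\Hinv$ being a nonzero invariant subspace and hence all of $L^2(A,\fai)$.
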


\begin{proof}
When $\fai=\varepsilon$, we have $\varepsilon(\chi(\alpha))=d_\alpha$ for all $\alpha\in\widehat{G}$~\cite[Formula (5.11)]{Woronowicz1998}. Hence
$$\lim_{n\to\infty}\frac{1}{|F_n|_w}\sum_{\alpha\in F_n}d_\alpha\varepsilon(\chi(\alpha))=1.$$

Suppose $\fai\neq\varepsilon$.

Consider the GNS representation $\pi_\fai: A\to B(L^2(A,\fai))$. We have
\begin{align*}
&\lim_{n\to\infty}\frac{1}{|F_n|_w}\sum_{\alpha\in F_n}d_\alpha\fai(\chi(\alpha))  \\
&=\lim_{n\to\infty}\frac{1}{|F_n|_w}\sum_{\alpha\in F_n}d_\alpha\langle \pi_\fai(\chi(\alpha))(\hat{1}),\hat{1}\rangle  \\
&=\langle P(\hat{1}),\hat{1}\rangle.
\end{align*}
Hence $\lim_{n\to\infty}\frac{1}{|F_n|_w}\sum_{\alpha\in F_n}d_\alpha\fai(\chi(\alpha))\neq 0$ iff $P(\hat{1})\neq 0$.

To prove $\lim_{n\to\infty}\frac{1}{|F_n|_w}\sum_{\alpha\in F_n}d_\alpha\fai(\chi(\alpha))=0$ for $\fai\neq\varepsilon$, it suffices to prove that $P(\hat{1})=0$.

Suppose  $P(\hat{1})\neq0$. Then  $\Hinv\neq 0$. By Corollary~\ref{central}, the space $\Hinv$ is an invariant subspace of $L^2(A,\fai)$. Note that $\pi_\fai$ is irreducible since $\fai$ is a pure state. Hence
$\Hinv=L^2(A,\fai)$. In particular $\hat{1}\in \Hinv$. Thus for all $a\in A$, we have $\pi_\fai(a)(\hat{1})=\varepsilon(a)\hat{1}$. It follows that
$$\fai(a)=\langle\pi_\fai(a)(\hat{1}),\hat{1}\rangle=\langle\varepsilon(a)\hat{1},\hat{1}\rangle=\varepsilon(a)$$ for all $a\in A$, which contradicts that $\fai\neq\varepsilon$.
\end{proof}

\section{A Wiener type theorem for compact metrizable groups}\

In this section, we prove the following Wiener type theorem.

\medskip

\begin{theorem}~\label{WienerType}
Let $G$ be a compact metrizable group. Given a $y$ in $G$ and a right F{\o}lner sequence $\{F_n\}_{n=1}^\infty$ of $\widehat{G}$, for a finite Borel measure $\mu$ on $G$, we have
$$\lim_{n\to\infty}\frac{1}{|F_n|_w}\sum_{\alpha\in F_n}d_\alpha\sum_{1\leq i,j\leq d_\alpha}\mu(u^\alpha_{ij})\overline{u^\alpha_{ij}(y)}=\mu\{y\},$$
and $$\lim_{n\to\infty}\frac{1}{|F_n|_w}\sum_{\alpha\in F_n}d_\alpha\sum_{1\leq i,j\leq d_\alpha}|\mu(u^\alpha_{ij})|^2=\sum_{x\in G} \mu\{x\}^2.$$
Hence $\mu$ is continuous iff
$$\lim_{n\to\infty}\frac{1}{|F_n|_w}\sum_{\alpha\in F_n}d_\alpha\sum_{1\leq i,j\leq d_\alpha}|\mu(u^\alpha_{ij})|^2=0.$$ Here $(u^\alpha_{ij})_{1\leq i,j\leq d_\alpha}\in M_{d_\alpha}(C(G))$ stands for a unitary matrix presenting $\alpha\in\widehat{G}$.
\end{theorem}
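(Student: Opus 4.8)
The plan is to apply Theorem~\ref{MeanErgodic}, or more precisely its consequence Corollary~\ref{PureStates}, to the evaluation states on $C(G)$ for a compact metrizable group $G$. The key point is that when $G$ is a genuine (commutative) compact group, the irreducible representations are the classical matrix coefficients $u^\alpha_{ij}$, the character is $\chi(\alpha)=\sum_i u^\alpha_{ii}$, and the counit is evaluation at the identity $e$; more generally, for each $y\in G$ the evaluation $\mathrm{ev}_y(f)=f(y)$ is a pure state on $C(G)$, and $\mathrm{ev}_y=\varepsilon$ precisely when $y=e$. So the first step is to translate the two sums in the statement into expressions involving a state applied to characters of representations of $G$.

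First I would set up the measure side. Given a finite Borel measure $\mu$ on $G$, consider the convolution-type state obtained by averaging evaluations against $\mu$, or rather work directly with $\mu$ as a functional $f\mapsto \int_G f\,d\mu$ on $C(G)$. The key algebraic identity is that for $y\in G$,
\[
\sum_{1\le i,j\le d_\alpha}\mu(u^\alpha_{ij})\overline{u^\alpha_{ij}(y)}
=\int_G \Big(\sum_{i,j} u^\alpha_{ij}(x)\,\overline{u^\alpha_{ij}(y)}\Big)\,d\mu(x)
=\int_G \chi(\alpha)(xy^{-1})\,d\mu(x),
\]
using unitarity of the matrix $(u^\alpha_{ij}(y))$ and the fact that matrix coefficients compose under the group law. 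Thus $\frac{1}{|F_n|_w}\sum_{\alpha\in F_n}d_\alpha\sum_{i,j}\mu(u^\alpha_{ij})\overline{u^\alpha_{ij}(y)} = \int_G \big(\frac{1}{|F_n|_w}\sum_{\alpha\in F_n}d_\alpha\,\chi(\alpha)(xy^{-1})\big)\,d\mu(x)$. For fixed $x$, the integrand is, up to the change of variable $z=xy^{-1}$, the expression appearing in Corollary~\ref{PureStates} applied to the pure state $\mathrm{ev}_{xy^{-1}}$, so it converges pointwise to $1$ if $x=y$ and to $0$ otherwise, i.e.\ to $\mathbf{1}_{\{x=y\}}$. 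Since $\|\chi(\alpha)\|\le d_\alpha$ gives a uniform bound $\big|\frac{1}{|F_n|_w}\sum_{\alpha\in F_n}d_\alpha\chi(\alpha)(xy^{-1})\big|\le 1$, the dominated convergence theorem yields $\int_G \mathbf{1}_{\{x=y\}}\,d\mu(x)=\mu\{y\}$, which is the first formula.

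For the second formula I would expand $|\mu(u^\alpha_{ij})|^2 = \mu(u^\alpha_{ij})\overline{\mu(u^\alpha_{ij})} = \int_G\int_G u^\alpha_{ij}(x)\overline{u^\alpha_{ij}(y)}\,d\mu(x)\,d\mu(y)$, sum over $i,j$ to get $\int\int \chi(\alpha)(xy^{-1})\,d\mu(x)\,d\mu(y)$, and then apply the first part's computation with the outer integration in $y$ against $\mu$: by Fubini and dominated convergence (again via the bound $1$), the limit is $\int_G \mu\{y\}\,d\mu(y)=\sum_{x\in G}\mu\{x\}^2$, the last equality because $\mu$ is finite so has at most countably many atoms and $y\mapsto\mu\{y\}$ is $\mu$-a.e.\ supported on them. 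The final equivalence ($\mu$ continuous iff the second limit is $0$) is then immediate since $\sum_{x}\mu\{x\}^2=0$ iff $\mu\{x\}=0$ for every $x$. The main obstacle is the bookkeeping in establishing the identity $\sum_{i,j}u^\alpha_{ij}(x)\overline{u^\alpha_{ij}(y)}=\chi(\alpha)(xy^{-1})$ with the correct conventions (unitarity of $U^\alpha$, the relation $\overline{u^\alpha_{ij}(y)}=u^\alpha_{ij}(y^{-1})^{*}$ in the scalar case, i.e.\ $\kappa(u^\alpha_{ij})=(u^\alpha_{ji})^*$ evaluated at $y$), together with justifying the interchange of limit and integral; both are routine once the conventions are pinned down, and Corollary~\ref{PureStates} supplies exactly the pointwise convergence needed.
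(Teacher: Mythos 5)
Your proposal is correct, and its core is the same as the paper's: apply Corollary~\ref{PureStates} to the evaluation (pure) states of $C(G)$ to get the pointwise convergence of the kernel $K_n(z)=\frac{1}{|F_n|_w}\sum_{\alpha\in F_n}d_\alpha\chi(\alpha)(z)$ to $1_{\{e_G\}}$, use the bound $\|\chi(\alpha)\|\le d_\alpha$ for dominated convergence, and use unitarity to write $\chi(\alpha)(xy^{-1})=\sum_{i,j}u^\alpha_{ij}(x)\overline{u^\alpha_{ij}(y)}$. The difference is in packaging: the paper first isolates Lemma~\ref{MeasureatIdentity} ($\frac{1}{|F_n|_w}\sum_{\alpha\in F_n}d_\alpha\mu(\chi(\alpha))\to\mu\{e_G\}$) and then feeds it the convolution measures $\mu*\delta_{y^{-1}}$ for the first formula and $\mu*\bar\mu$ for the second, computing $\mu*\bar\mu\{e_G\}=\sum_x\mu\{x\}^2$ by decomposing $\mu$ into its atomic part plus a continuous part and using that convolution with a continuous measure is continuous. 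You instead apply dominated convergence directly, on $G$ for the first formula and on $(G\times G,\mu\times\mu)$ for the second, obtaining $\int_G\int_G 1_{\{x=y\}}\,d\mu(x)\,d\mu(y)=\int_G\mu\{y\}\,d\mu(y)=\sum_{x\in G}\mu\{x\}^2$, the last step because a finite measure has at most countably many atoms. This bypasses the convolution formalism and the atomic/continuous decomposition (and the measurability of the diagonal is harmless since $G$ is metrizable, so the diagonal is closed), at the cost of working on the product space; the paper's route keeps every step as a statement about a single auxiliary measure evaluated at $e_G$, which is what Lemma~\ref{MeasureatIdentity} is built for. Both arguments are complete; yours is a slightly more streamlined treatment of the second formula.
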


From now on $G$ stands for a compact metrizable group. When thinking $G$ as a compact quantum group, the coproduct $$\Delta: C(G)\to C(G)\otimes C(G)$$ is given by
$\Delta(f)(x,y)=f(xy)$, the coinverse $\kappa: C(G)\to C(G)$ is given by $\kappa(f)(x)=f(x^{-1})$ and the counit $\varepsilon: C(G)\to\mathbb{C}$ is given by $\varepsilon(f)=f(e_G)$ for all $f\in C(G)$ and $x,y\in G$. Here $e_G$ is the neutral element of $G$.

\medskip

\begin{definition}~\label{ConjugateMeasure}
Given a finite Borel measure $\mu$ on $G$, the {\bf conjugate} $\bar{\mu}$ of $\mu$ is defined by
$$\bar{\mu}(f)=\int_G f(x^{-1})\,d\mu(x)=\mu(\kappa(f))$$ for all $f\in C(G)$, and $\bar{\mu}$ is also a finite Borel measure on $G$. In another word, $\bar{\mu}(E)=\mu(E^{-1})$ for every Borel subset $E$ of $G$.
\end{definition}
For an $x\in G$, use $\delta_x$ to denote the Dirac measure at $x$.

The {\bf convolution} $\mu*\nu$ of two finite Borel measures $\mu$ and $\nu$ on $G$ is defined by
$$\mu*\nu(f)=(\mu\otimes\nu)\Delta(f)=\int_G\int_G \,f(xy)\,d\mu(x)d\nu(y)$$ for all $f\in C(G)$. For every Borel subset $E$ of $G$, we have
$$\mu*\nu(E)=\int_G \nu(x^{-1}E)\,d\mu(x)=\int_G \mu(Ey^{-1})\,d\nu(y).$$ If either $\mu$ or $\nu$ is continuous, then so is $\mu*\nu$.

We can write a finite Borel measure $\mu$ on $G$ as $\mu=\sum_{i}\lambda_i\delta_{x_i}+\mu_C$  for every atom $x_i$ with $\mu\{x_i\}=\lambda_i$ and a finite continuous Borel measure $\mu_C$.

\medskip

\begin{lemma}~\label{MeasureatIdentity}
Let $\mu$ be a finite Borel measure on $G$ and $\{F_n\}_{n=1}^\infty$ be a right F{\o}lner sequence of $\widehat{G}$. Then
$$\lim_{n\to\infty}\frac{1}{|F_n|_w}\sum_{\alpha\in F_n}d_\alpha\mu(\chi(\alpha))=\mu\{e_G\}.$$
\end{lemma}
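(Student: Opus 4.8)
The plan is to deduce Lemma~\ref{MeasureatIdentity} directly from the mean ergodic theorem (Theorem~\ref{MeanErgodic}), or more precisely from its consequence Corollary~\ref{PureStates}, after reducing from a general finite Borel measure to a pure state. First I would normalize: if $\mu=0$ the claim is trivial, so assume $\mu(G)>0$ and work with the probability measure $\mu/\mu(G)$, noting that both sides of the asserted identity scale linearly in $\mu$; thus it suffices to treat a Borel probability measure $\mu$ on $G$. Such a $\mu$ gives a state $\fai_\mu$ on $A=C(G)$ by $\fai_\mu(f)=\int_G f\,d\mu$, and the quantity $\frac{1}{|F_n|_w}\sum_{\alpha\in F_n}d_\alpha\mu(\chi(\alpha))$ is exactly $\frac{1}{|F_n|_w}\sum_{\alpha\in F_n}d_\alpha\fai_\mu(\chi(\alpha))$, while $\mu\{e_G\}=\fai_\mu(\,\cdot\,)$ evaluated against the atom at $e_G$. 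So the target is $\lim_n \frac{1}{|F_n|_w}\sum_{\alpha\in F_n}d_\alpha\fai_\mu(\chi(\alpha))=\mu\{e_G\}$.

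The cleanest route is via the GNS representation $\pi_\mu:A\to B(L^2(A,\fai_\mu))$ with cyclic vector $\hat 1$. Exactly as in the proof of Corollary~\ref{PureStates}, one has
\[
\frac{1}{|F_n|_w}\sum_{\alpha\in F_n}d_\alpha\fai_\mu(\chi(\alpha))=\Big\langle \frac{1}{|F_n|_w}\sum_{\alpha\in F_n}d_\alpha\pi_\mu(\chi(\alpha))\hat 1,\hat 1\Big\rangle,
\]
and by Theorem~\ref{MeanErgodic} the operators in the average converge SOT to the projection $P$ onto $\Hinv=\{\xi\mid \pi_\mu(a)\xi=\varepsilon(a)\xi\ \forall a\}$, hence the limit equals $\langle P\hat 1,\hat 1\rangle=\|P\hat 1\|^2$. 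It remains to identify $\|P\hat 1\|^2$ with $\mu\{e_G\}$. The key point is that $\varepsilon(f)=f(e_G)=\int f\,d\delta_{e_G}$, so a vector $\xi\in\Hinv$ corresponds (via the cyclicity) to the "part of $\mu$ sitting at $e_G$"; concretely, using the Jordan decomposition $\mu=\mu\{e_G\}\,\delta_{e_G}+\mu'$ with $\mu'\{e_G\}=0$, one expects $\langle P\hat 1,\hat 1\rangle=\mu\{e_G\}$. To make this rigorous I would argue that $P\hat 1$ is the (Radon--Nikodym-type) component of $\hat 1$ along the $\varepsilon$-isotypic subspace: since $\delta_{e_G}$ is a pure state with $\delta_{e_G}=\varepsilon$ on $C(G)$, the measure-theoretic decomposition $\mu = \mu\{e_G\}\delta_{e_G} + \mu'$ yields a corresponding orthogonal-type decomposition at the level of the GNS space, and $\langle P\hat 1, \hat 1\rangle$ extracts precisely the coefficient $\mu\{e_G\}$.

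The main obstacle is this last identification step. One clean way to avoid subtle GNS bookkeeping: compute $\langle P\hat1,\hat1\rangle$ by approximating $P\hat 1$. Since $P$ is the SOT-limit of the averages, $\langle P\hat1,\hat1\rangle=\lim_n\frac1{|F_n|_w}\sum_{\alpha\in F_n}d_\alpha\mu(\chi(\alpha))$ is a limit of the \emph{same} expression — which is circular — so instead I would use Corollary~\ref{PureStates} together with an approximation of $\mu$ by convex combinations of Dirac measures. Specifically, given $\e>0$, write $\mu=\sum_{i=1}^k \lambda_i\delta_{x_i}+\mu_C'$ where the $\delta_{x_i}$ are the large atoms, $\sum\lambda_i\le 1$, and $\|\mu_C'\|$ is small (here $\mu_C'$ collects the small atoms and the continuous part). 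For each Dirac measure $\delta_{x_i}$ the associated state is pure (evaluation at $x_i$), and it equals $\varepsilon$ iff $x_i=e_G$; so Corollary~\ref{PureStates} gives $\lim_n\frac1{|F_n|_w}\sum_{\alpha\in F_n}d_\alpha\,\delta_{x_i}(\chi(\alpha))=1$ if $x_i=e_G$ and $=0$ otherwise. Using $|\chi(\alpha)|\le d_\alpha$ and hence $\big|\frac1{|F_n|_w}\sum_{\alpha\in F_n}d_\alpha\nu(\chi(\alpha))\big|\le \|\nu\|$ uniformly in $n$ for any finite measure $\nu$, the contribution of $\mu_C'$ is bounded by $\|\mu_C'\|<\e$ for all $n$. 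Taking $n\to\infty$ and then $\e\to0$ yields $\lim_n\frac1{|F_n|_w}\sum_{\alpha\in F_n}d_\alpha\mu(\chi(\alpha))=\mu\{e_G\}$. Thus the proof is: (i) reduce to a probability measure by linearity; (ii) split off finitely many atoms up to small total variation; (iii) apply Corollary~\ref{PureStates} to each Dirac term and the uniform $\|\cdot\|$-bound to the remainder; (iv) pass to the limit. The only care needed is the uniform bound $\|\frac1{|F_n|_w}\sum_{\alpha\in F_n}d_\alpha\chi(\alpha)\|\le 1$ in $C(G)$, which follows from $\|\chi(\alpha)\|\le d_\alpha$ and $\sum_{\alpha\in F_n}d_\alpha^2=|F_n|_w$.
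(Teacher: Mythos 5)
Your second, ``concrete'' argument --- the one you actually rely on --- has a genuine gap at the decomposition step. You write $\mu=\sum_{i=1}^k\lambda_i\delta_{x_i}+\mu_C'$ with the $x_i$ the large atoms and assert $\|\mu_C'\|<\varepsilon$, where $\mu_C'$ ``collects the small atoms and the continuous part.'' The small atoms can indeed be given arbitrarily small total mass, but the continuous part cannot: if $\mu$ is, say, the Haar measure of $G$, then $\mu_C'=\mu$ and $\|\mu_C'\|=1$ no matter how many atoms you split off. Your (correct) uniform bound $\bigl|\frac{1}{|F_n|_w}\sum_{\alpha\in F_n}d_\alpha\nu(\chi(\alpha))\bigr|\le\|\nu\|$ therefore only yields $\limsup_n\bigl|\frac{1}{|F_n|_w}\sum_{\alpha\in F_n}d_\alpha\mu(\chi(\alpha))-\mu\{e_G\}\bigr|\le\|\mu_C\|$, which proves nothing exactly in the case the lemma (and the Wiener-type theorem) is really about: measures with a nontrivial continuous part. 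Showing that the continuous part contributes $0$ in the limit is the actual content of the statement, and your argument never addresses it. Your first, GNS-based route is also incomplete by your own admission: the identification $\langle P\hat 1,\hat 1\rangle=\mu\{e_G\}$ is essentially equivalent to the lemma and is left unproved.

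The missing idea, which is how the paper proceeds, is to apply Corollary~\ref{PureStates} pointwise at \emph{every} $x\in G$, not only at the atoms: since $C(G)$ is commutative, each evaluation $\delta_x$ is a pure state, and $\delta_x=\varepsilon$ iff $x=e_G$, so the functions $g_n(x)=\frac{1}{|F_n|_w}\sum_{\alpha\in F_n}d_\alpha\chi(\alpha)(x)$ converge pointwise on $G$ to the characteristic function of $\{e_G\}$. Combined with the uniform bound $|g_n(x)|\le 1$ (which you did establish), Lebesgue's Dominated Convergence Theorem gives $\int_G g_n\,d\mu\to\mu\{e_G\}$ for an arbitrary finite Borel measure $\mu$. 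This pointwise-plus-dominated-convergence step replaces your total-variation approximation, which cannot see the continuous part, by an argument that handles it automatically; with that substitution the rest of your outline (linearity, the bound $\|\chi(\alpha)\|\le d_\alpha$, $\sum_{\alpha\in F_n}d_\alpha^2=|F_n|_w$) is fine.
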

\begin{proof}
By Corollary~\ref{PureStates}, the sequence $\{\frac{1}{|F_n|_w}\sum_{\alpha\in F_n}d_\alpha\chi(\alpha)(x)\}\subseteq C(G)$ pointwisely converges to $1_{e_G}$~(the characteristic function of $\{e_G\}$). Note that $|\frac{1}{|F_n|_w}\sum_{\alpha\in F_n}d_\alpha\chi(\alpha)(x)|\leq 1$ for all $x\in G$, hence by Legesgue's Dominated Convergence Theorem~\cite[1.34]{Rudin1987}, we have
\begin{align*}
&\lim_{n\to\infty}\frac{1}{|F_n|_w}\sum_{\alpha\in F_n}d_\alpha\mu(\chi(\alpha))=\lim_{n\to\infty}\int_G \frac{1}{|F_n|_w}\sum_{\alpha\in F_n}d_\alpha\chi(\alpha)(x)\,d\mu(x) \\
&=\int_G\lim_{n\to\infty} \frac{1}{|F_n|_w}\sum_{\alpha\in F_n}d_\alpha\chi(\alpha)(x)\,d\mu(x)=\int_G 1_{e_G}\,d\mu=\mu\{e_G\}.
\end{align*}
\end{proof}

\begin{proof}~[Proof of Theorem~\ref{WienerType}]\

Given a finite Borel measure $\mu$ on $G$ and a $y\in G$, consider the measure $\mu*\delta_{y^{-1}}$. By Lemma~\ref{MeasureatIdentity}, we have
$$\lim_{n\to\infty}\frac{1}{|F_n|_w}\sum_{\alpha\in F_n}d_\alpha\mu*\delta_{y^{-1}}(\chi(\alpha))=\mu*\delta_{y^{-1}}\{e_G\}.$$ Note that
\begin{align*}
&\mu*\delta_{y^{-1}}(\chi(\alpha))=\int_G\int_G \chi(\alpha)(xz)\,d\mu(x)d\delta_{y^{-1}}(z)   \\
&=\int_G \chi(\alpha)(xy^{-1})\,d\mu(x)=\int_G \sum_{1\leq i\leq d_\alpha} u^\alpha_{ii}(xy^{-1})\,d\mu(x)   \\
&=\int_G \sum_{1\leq i\leq d_\alpha}\sum_{1\leq j\leq d_\alpha} u^\alpha_{ij}(x)u^\alpha_{ji}(y^{-1})\,d\mu(x)  \\
&=\int_G \sum_{1\leq i\leq d_\alpha}\sum_{1\leq j\leq d_\alpha} u^\alpha_{ij}(x)\overline{u^\alpha_{ij}(y)}\,d\mu(x).
\end{align*}
Moreover $$\mu*\delta_{y^{-1}}\{e_G\}=\int_G\int_G 1_{e_G}(xz)\,d\mu(x)d\delta_{y^{-1}}(z)=\int_G 1_{e_G}(xy^{-1})\,d\mu(x)=\mu\{y\}.$$ This completes the proof of part 1.

Applying Lemma~\ref{MeasureatIdentity} to $\mu*\bar{\mu}$, we have
$$\lim_{n\to\infty}\frac{1}{|F_n|_w}\sum_{\alpha\in F_n}d_\alpha\mu*\bar{\mu}(\chi(\alpha))=\mu*\bar{\mu}\{e_G\}.$$

Since $\mu=\sum_{x_i \,{\rm atoms}}\lambda_i\delta_{x_i}+\mu_C$  with  $\lambda_i=\mu\{x_i\}$ and $\mu_C$ a finite continuous Borel measure, we have
$$\bar{\mu}=\sum_{x_i \,{\rm atoms}}\lambda_i\overline{\delta_{x_i}}+\overline{\mu_C}=\sum_{x_i \,{\rm atoms}}\lambda_i\delta_{x_i^{-1}}+\overline{\mu_C}.$$

Hence
\begin{align*}
\mu*\bar{\mu}=&\sum_i\sum_j\lambda_i\lambda_j \delta_{x_i}*\delta_{x_j^{-1}}   \\
&+\sum_i \lambda_i\delta_{x_i}*\overline{\mu_C}+\sum_j\lambda_j\mu_C*\delta_{x_j^{-1}}+\mu_C*\overline{\mu_C}.
\end{align*}

Note that $\sum_i \lambda_i\delta_{x_i}*\overline{\mu_C}+\sum_j\lambda_j\mu_C*\delta_{x_j^{-1}}+\mu_C*\overline{\mu_C}$ is a finite continuous measure and
$\sum_{i,j}\lambda_i\lambda_j \delta_{x_i}*\delta_{x_j^{-1}}=\sum_{i,j}\lambda_i\lambda_j \delta_{x_ix_j^{-1}}.$  It follows that
$$\mu*\bar{\mu}\{e_G\}=\sum_{x_i \,{\rm atoms}} \lambda_i^2=\sum_{x_i \,{\rm atoms}} \mu\{x_i\}^2=\sum_{x\in G} \mu\{x\}^2.$$

On the other hand,
\begin{align*}
&\mu*\bar{\mu}(\chi(\alpha))=\int_G\int_G \chi(\alpha)(xy)\,d\mu(x)d\bar{\mu}(y)   \\
&=\int_G \int_G\chi(\alpha)(xy^{-1})\,d\mu(x)d\mu(y)=\int_G \int_G\sum_{1\leq i\leq d_\alpha} u^\alpha_{ii}(xy^{-1})\,d\mu(x)d\mu(y)   \\
&=\int_G \int_G\sum_{1\leq i\leq d_\alpha}\sum_{1\leq j\leq d_\alpha} u^\alpha_{ij}(x)u^\alpha_{ji}(y^{-1})\,d\mu(x)d\mu(y)  \\
&=\sum_{1\leq i\leq d_\alpha}\sum_{1\leq j\leq d_\alpha}\int_G  u^\alpha_{ij}(x)\,d\mu(x)\int_G\overline{u^\alpha_{ij}(y)}\,d\mu(y) \\
&=\sum_{1\leq i,j\leq d_\alpha}|\mu(u^\alpha_{ij})|^2.
\end{align*}
This ends the proof of part 2, and part 3 follows immediately.
\end{proof}

\end{document}